\documentclass[reqno,11pt]{amsart}
\usepackage{a4wide,color,eucal,enumerate,mathrsfs}
\usepackage[normalem]{ulem}
\usepackage{amsmath,amssymb,epsfig,amsthm} %,bbm
\usepackage[latin1]{inputenc}
\usepackage{psfrag}

\textwidth=17.5cm
\textheight=21cm
\oddsidemargin -0.5cm
\evensidemargin -0.5cm

\parindent=13pt

\def\az{\alpha}

\def\ez{\epsilon}

\def\sz{\sigma}

\def\bint{{\ifinner\rlap{\bf\kern.35em--}
\int\else\rlap{\bf\kern.45em--}\int\fi}\ignorespaces}

\def\bbint{{\ifinner\rlap{\bf\kern.35em--}
\hspace{0.078cm}\int\else\rlap{\bf\kern.45em--}\int\fi}\ignorespaces}

\newcommand{\R}{\mathbb{R}}

\newtheorem{thm}{Theorem}[section]
%[section]     %@@!!@@!!
\newtheorem{prop}[thm]{Proposition}%[section]    %@@!!@@!!
%[section] 
\newtheorem{cor}[thm]{Corollary}%[section]    %@@!!@@!!
\newtheorem{defn}[thm]{Definition}%[section]    %@@!!@@!!
\numberwithin{equation}{section}

\theoremstyle{remark}
\newtheorem{rem}[thm]{Remark}%[section]    %@@!!@@!!

\def\bint{{\ifinner\rlap{\bf\kern.35em--}
\int\else\rlap{\bf\kern.45em--}\int\fi}\ignorespaces}

\usepackage{graphicx}
\usepackage{import}
\usepackage{xifthen}
\usepackage{pdfpages}
\usepackage{transparent}

\newcommand{%
	
	\import{./}{.pdf_tex}
}[1]{%
	
	\import{./}{#1.pdf_tex}
}

%%%%%%%%%%%%%%%%%%%%%%%%%%%%%%%%%%%%%%%%%%%%%%%%%%%%%%%%%%%%%%%%%%%%%
%\doublespacing
\title[Strong  stability of convexity]{Strong stability of convexity with respect to the perimeter}
\author{Alessio Figalli, Yi Ru-Ya Zhang}
\date{\today}

\address{ETH Z\"urich, Department of Mathematics, R\"amistrasse 101, 8092, Z\"urich, Switzerland}
\email{alessio.figalli@math.ethz.ch}  

\address{Academy of Mathematics and Systems Science, the Chinese Academy of Sciences, Beijing 100190, China}
\email{yzhang@amss.ac.cn}

 \thanks{The first author have received funding from the European Research Council under the Grant Agreement No. 
721675 ``Regularity and Stability in Partial Differential Equations (RSPDE)''. The second author is funded by National Key R\&D Program of China (Grant No. 2021YFA1003100),  the Chinese Academy of Science, NSFC grant No. 12288201, and CAS Project for Young Scientists in Basic Research, Grant No. YSBR-031.}

\subjclass[2000]{49Q10, 49Q20}
\keywords{convex geometry, perimeter, stability.}

\begin{document}
\begin{abstract}
Let $E\subset \mathbb R^n$, $n\ge 2$, be a set of finite perimeter with $|E|=|B|$, where $B$ denotes the unit ball.
When $n=2$, since convexification decreases perimeter (in the class of open connected sets), it is easy to prove the existence of a convex set $F$, with $|E|=|F|$, such that
$$
 P(E) -  P(F) \ge c\,|E\Delta F|, \qquad c>0. 
$$
Here we prove that, when $n\ge 3$, there exists a convex set $F$, with $|E|=|F|$, such that
$$
 P(E) -  P(F) \ge c(n)  \,f\big(|E\Delta F|\big), \qquad c(n)>0,\qquad f(t)=\frac{t}{|\log t|} \text{ for }t \ll 1.
$$
Moreover, one can choose $F$ to be a small $C^2$-deformation of the unit ball. 
Furthermore, this estimate is essentially sharp as we can show that the inequality above fails for $f(t)=t.$

Interestingly, the proof of our result relies on a new stability estimate for Alexandrov's Theorem on constant mean curvature sets.
\end{abstract}

\maketitle
\section{Introduction}

In shape analysis, perimeter has been frequently used to measure the convexity of a set. For instance, since  for any open connected set $E\subset \mathbb R^2$ the convex hull ${\rm cov}(E)$ is the set with minimal perimeter among those containing $E$, one can use the closeness of the ratio 
$$\frac{P({\rm cov}(E))}{P(E)}$$
to $1$ in order to measure the convexity of $E$; see e.g. \cite{ZR2004} and the references therein. Moreover, via this planar property one can explicitly calculate the difference between the perimeters of the sets and obtain a quantitative inequality: 
\begin{equation}\label{stability plane}
\frac{P(E)}{|E|^{\frac 1 2}}- \frac{P({\rm cov}(E))}{|{\rm cov}(E)|^{\frac 1 2}}\ge \sqrt{\pi} \frac{|{\rm cov}(E)\setminus E|}{|{\rm cov}(E)|^{\frac12}|E|^{\frac12}} \qquad \forall\,E\subset \mathbb R^2 \text{ open, connected, and bounded}.
\end{equation}
 In particular, this implies that, for $|E|=|B|=\pi$,
\begin{equation}\label{stability plane2}
 P(E) -  P(F) \ge    \frac{1}{2\sqrt{2}}\,|E\Delta F|,
\end{equation}
where $F$ is obtained dilating ${\rm cov}(E)$ so that $|F|=|E|$,  see Appendix~\ref{app}. 

Unfortunately, when $n\ge 3$, convexity of a set is not related anymore to the perimeter of its convex hull.
Indeed, there are many open connected  sets  $E\subset \mathbb R^n$ for which  
$$P({\rm cov}(E))>  P(E)$$
(a standard example is a dumbbell-type set with a long handle).
Still, it is reasonable to ask if an inequality similar to \eqref{stability plane} holds in higher dimension with a convex set different from ${\rm cov}(E)$. Namely, given a set $E\subset \mathbb R^n$ with $|E|=|B|$ (here and in the sequel, $B$ denotes the unit ball), we would like to find a set $F$ in the collection $\mathscr C$ of convex sets with $|E|=|F|$ so that
$$P(E)-P(F)\ge c(n)\, f\big(|E\Delta F|\big)$$
where $c(n)>0$, $f\ge 0$ is an increasing function, and $E\Delta F:=(E\cup F)\setminus (E\cap F)$. In addition, one would like to optimize the asymptotic behavior of $f$ near the origin.

\smallskip

When the collection $\mathscr C$ is the class of volume-constrained minimizers for the perimeter, i.e.,\ the class of balls, such an inequality is called \emph {the stability of isoperimetric inequality}. The sharp behavior of $f$ was a long-standing open problem that  was eventually settled by Fusco, Maggi, and Pratelli in \cite{FMP2008}: they proved that, for any
set of finite perimeter $E\subset \mathbb R^n$ with $|E|=|B|$, one has
\begin{equation}\label{FMP}
P(E)-P(B)\ge c(n) \left(\min_{x\in \mathbb R^n} |E\Delta(x+B)|\right)^2.
\end{equation}
In other words, $f(t)=t^\az$ with $\az=2$, and the result is sharp in the sense that such an inequality is false for  $\az<2$.  

This sharp result was extended to the general Wulff inequality in \cite{FMP2010}, which involves a generalization of Euclidean perimeter in the anisotropic case, and the sharp stability with power $f(t)=t^\az,\,\az=2,$ was proven. 
Afterward, some other stronger distances were also considered, see e.g.\ Fusco and Julin \cite{FJ2014} and Neumayer  \cite{N2016}.

Still in this direction, strong stability for a special class of perimeters given by crystalline norms was shown by the authors \cite{FZ2022}. 
There we considered the perimeter  $P_K$ for which the volume-constrained minimizer is given by a convex polyhedron $K$ and we proved that, 
 for any set of finite perimeter $E\subset \mathbb R^n$ close to $K$ with $|E|=|K|$, 
$$P_K(E)-P_K(K')\ge c(n,K)  \biggl(\min_{K'\in \mathscr C(K)}|E\Delta K'|\biggr),$$
where $\mathscr C(K)$ consists of  polyhedra whose faces are parallel to the ones of $K$.
Note that in this case, similarly to \eqref{stability plane}, one has $f(t)=t^\alpha$ with $\alpha=1$.

\smallskip

In this paper we focus on the classical Euclidean perimeter and  investigate the analogue of \eqref{stability plane} in arbitrary  dimension. Our main result in this direction is Theorem~\ref{main thm} below, which shows the strong stability of convexity with respect to the perimeter.  We note that the result is true for $n\geq 2$, although its interest is mainly in the case $n\geq 3$.

Before stating our theorem, let us introduce the following notation: For every continuous function $v\colon \mathbb S^{n-1}\to \mathbb R$ with $\|v\|_{L^\infty(\mathbb S^{n-1})}\le \frac 1 2 $, we define the open set
\begin{equation}
\label{eq:Bv}
B+v=\big\{y\in \mathbb R^n\colon y=t\omega(1+v(\omega))\quad \text{for some }\omega\in \mathbb S^{n-1}, \, 0\le t<1\big\}. 
\end{equation}
Also, we shall write $x+(B+v)$ to denote the translation of the set $B+v$ by a vector $x \in \R^n$.
Now we state our first main result. 
\begin{thm}\label{main thm}
Let $E\subset \mathbb R^n$ be a set of finite perimeter with $|E|=|B|$. 
Define, for $\lambda>0$, 
$$\mathscr C_\lambda:=\left\{F=x+(B+v)\colon v\in C^2(\mathbb S^{n-1}),\, \|v\|_{C^2(\mathbb S^{n-1})}\le \lambda, \,x\in \mathbb R^n, \,|F|=|B| \right\}.$$
Then there exist (small) dimensional constants $\overline\lambda=\overline\lambda(n)>0$ and $c(n)>0$ such that the following holds: For each $0<\lambda\le \overline \lambda$ we can find a uniformly convex set $F\in \mathscr C_\lambda$ satisfying
\begin{equation}\label{main thm ineq}
 P(E) -  P(F) \ge c(n) \lambda  f\big(|E\Delta F|\big), 
 \end{equation}
where 
\begin{equation}\label{def:f}
f(t)=\left\{\begin{array}{ll}
\frac{t}{|\log t|} & \ \text{ when } 0<t<\frac 1 e\\
\frac 1 e  &  \ \text{ when } t\ge \frac 1 e
\end{array}\right. . 
 \end{equation}
\end{thm}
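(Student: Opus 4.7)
The strategy is to split the analysis into two regimes depending on $\eta := |E \Delta B|$ after translating $E$ to realize the minimum $\min_x |E\Delta(x+B)|$, and to handle the delicate near-ball regime via a penalized variational problem whose minimizer is controlled by a new stability version of Alexandrov's Theorem.

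\textbf{Step 1 (FMP reduction and the easy regime).} The sharp quantitative isoperimetric inequality \eqref{FMP} gives $P(E)-P(B)\ge c(n)\eta^2$. If $\eta\,|\log\eta|\ge \lambda$, I take $F=B\in\mathscr C_\lambda$ (i.e.\ $v\equiv 0$): since $f(\eta)\le \eta/|\log\eta|$,
$$P(E)-P(F)\ge c(n)\eta^2\ge c(n)\lambda\,\eta/|\log\eta|\ge c(n)\lambda f(|E\Delta F|),$$
proving \eqref{main thm ineq}. The borderline range $\eta\ge 1/e$ is analogous, using that then $P(E)-P(B)$ is bounded below by a positive dimensional constant which dominates $\lambda/e$ for $\overline\lambda$ small. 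Thus we are reduced to the hard regime $\eta\,|\log\eta|\ll \lambda$, in which $\eta$ is extremely small but taking $F=B$ no longer suffices.

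\textbf{Step 2 (Variational construction in the hard regime).} Here $F$ must be a genuine small perturbation of $B$ fitting $E$ more finely. I introduce the penalized functional
$$\mathcal I(F):=P(F)+c(n)\lambda f(|E\Delta F|), \qquad m:=\inf\{\mathcal I(F):F\in\mathscr C_\lambda,\ |F|=|B|\},$$
and take a minimizer $F^*=x^*+(B+v^*)$; existence follows from Arzela--Ascoli (uniform $C^2$ bound on $v$ gives $C^{1,\alpha}$-precompactness), lower semicontinuity of $P$, and $L^1$-continuity of the penalty. The theorem's inequality \eqref{main thm ineq} is exactly $\mathcal I(F^*)\le P(E)$. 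One way to verify this is through an explicit competitor: in the hard regime, Tamanini-type $\varepsilon$-regularity for sets with small isoperimetric deficit forces $\partial E$ to be a $C^{1,\alpha}$-small graph $\omega\mapsto\omega(1+u(\omega))$ over $\mathbb S^{n-1}$, and taking $v$ to be a spherical-harmonic truncation of $u$ at a level tuned so that $\|v\|_{C^2}\le \lambda$ combined with a second-order Fuglede-type expansion of $P$ on nearly-spherical sets and the bound $|E\Delta(B+v)|\ls\|u-v\|_{L^1(\mathbb S^{n-1})}$ yields $\mathcal I(B+v)\le P(E)$.

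\textbf{Step 3 (Alexandrov stability --- the main obstacle).} Equivalently, one analyzes $F^*$ directly: outward-normal variations $\phi\nu$ of $F^*$ (adjusted to preserve volume) applied to $\mathcal I$ give
$$H_{F^*}=\mu_{\mathrm{vol}}+O\big(\lambda f'(|E\Delta F^*|)\big)\quad\text{on }\partial F^*,$$
so $\partial F^*$ has almost constant mean curvature. The new quantitative version of Alexandrov's Theorem announced in the abstract then forces $F^*$ to be $C^2$-close to a ball by an amount strictly smaller than $\lambda$, so that $F^*$ lies in the interior of $\mathscr C_\lambda$ and the side constraint $\|v^*\|_{C^2}\le\lambda$ is inactive; this is what certifies that $F^*$ is a genuine unconstrained critical point and closes the argument by variational comparison. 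I expect this Alexandrov stability step to be the principal obstacle: the penalty $|E\Delta F|$ is non-smooth in $F$, and since $E$ is only of finite perimeter, the sign pattern of $\chi_E-\chi_{F^*}$ on $\partial F^*$ appearing in the Euler--Lagrange equation has no classical trace. Hence the almost-constancy of $H_{F^*}$ must be interpreted in a distributional sense, and the Alexandrov stability theorem has to be sufficiently robust to absorb this weak formulation.
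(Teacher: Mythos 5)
Your Step 1 reduction via FMP is sound and a bit more direct than the paper's contradiction argument, but Steps 2--3 contain two gaps that defeat the proposal. The first is fatal: you minimize $\mathcal I$ only over $\mathscr C_\lambda$, so to prove $\mathcal I(F^*)\le P(E)$ you would need a competitor in $\mathscr C_\lambda$ already known to have $\mathcal I$-value $\le P(E)$, and no such competitor is available because $E$ itself is generically not in $\mathscr C_\lambda$ nor even nearly spherical. Your justification, that small isoperimetric deficit forces $\partial E$ to be a $C^{1,\alpha}$ graph via ``Tamanini-type $\varepsilon$-regularity,'' is false: $E$ is an arbitrary finite-perimeter set, not an almost-minimizer of anything, and small deficit only gives $L^1$-closeness to $B$ (think of $B$ decorated with a dense family of microscopic spikes or dimples). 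The paper circumvents this by minimizing $P(G)+2\lambda f(|E\Delta G|)$ over \emph{all} finite-perimeter sets $G$ with $|G|=|B|$. Then $E$ itself is a legitimate competitor, giving $P(H)+2\lambda f(|E\Delta H|)\le P(E)$ for the minimizer $H$ for free; and since $f$ is Lipschitz, $H$ is a $\Lambda$-minimizer of perimeter, so regularity theory applies to $H$ (not to $E$), showing $H=B+w$ is nearly spherical and satisfies the Euler--Lagrange equation \eqref{equ v}. The regularity argument goes through the minimizer, never through $E$.

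The second gap is your assertion that the Alexandrov stability forces $F^*$ to be $C^2$-close to a ball. Theorem~\ref{mean curvature stability} gives only $W^{2,p}$ (all $p<\infty$) and $D^2_\tau$-BMO control from an $L^\infty$ bound on $\mathcal H_{\partial H}-\mu$, not $C^2$ control -- and if it did give $C^2$ control one would obtain the linear estimate $f(t)=t$, which the paper shows is false for $n\ge 3$ in Section~\ref{not t}. The $|\log t|$ loss in $f$ is precisely the cost of upgrading BMO to $L^\infty$: the paper mollifies $w$ through the heat kernel at the carefully tuned scale $\varepsilon=|E\Delta H|^6$, uses John--Nirenberg to convert the BMO bound into $\|D^2_\tau w_\varepsilon\|_{L^\infty}\lesssim \lambda|\log\varepsilon|f'(|E\Delta H|)\lesssim\lambda$, and only then obtains a $C^2$ uniformly convex competitor $F$ at $L^1$ cost $\lesssim |E\Delta H|^3$. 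Without this mollification step you have neither a $C^2$ candidate, nor convexity, nor the correct dependence on $|E\Delta F|$; and arguing that the $\mathscr C_\lambda$ constraint is inactive at $F^*$ would not salvage the variational comparison, since $F^*$ would still be a minimizer only over $\mathscr C_\lambda$ and $E$ still cannot serve as a competitor.
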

\begin{rem}
Estimate \eqref{main thm ineq} degenerates as $\lambda \to 0$. This is expected since the collection $ \mathscr C_0$ only consists of translations of the unit ball and, in that case,  the sharp result is provided by \eqref{FMP}.
\end{rem}
\begin{rem}
Even if the set $E$ in Theorem~\ref{main thm} belongs to $\mathscr C_\lambda$, it is not necessarily true (at least with the methods used in this paper) that $F=E.$ Namely, $F$ cannot be regarded a projection of $E$ on the class of (smooth uniformly) convex sets. This is different from the results in \cite{FZ2022}. 
\end{rem}

\begin{rem}
As discussed in Section~\ref{not t}  the choice of the function $f$ is essentially sharp for $n \geq 3$, since inequality \eqref{main thm ineq} is false  with $f(t)=t$. %Also, as one can easily check from the proof of Theorem~\ref{main thm}, the convex set $F$ in Theorem~\ref{main thm} can be chosen to satisfy the same uniform convexity and regularity estimates as in Proposition~\ref{nearly spherical}.
\end{rem}

To prove Theorem~\ref{main thm}, we will need a stability result for Alexandrov's Theorem in the class of sets that are Lipschitz-close to a sphere.
Let us recall some recent results in this direction.

After a series of very general and fundamental results \cite{CM2017-2,KM2017,DMMN2018} that deal with bubbling phenomena, several authors tried to understand the stability of Alexandrov's Theorem in the absence of bubbling. 

The first fundamental results in this directions are contained in \cite[Theorems 1.8 and 1.10]{KM2017},
where  Krummel and Maggi consider the setting of sets that are Lipschitz-close to a sphere (as in the current paper) and proved sharp stability estimates in terms of the $L^2$-oscillations of the mean curvature.

Later,
Ciraolo and Vezzoni \cite{CV2018} proved that when  $\partial E$ is a connected closed hypersurface of class $C^2$ with small oscillation in the mean curvature $\mathcal H_{\partial E}$, then there exist two concentric balls $B_{r_i}(x_0)$ and $B_{r_e}(x_0)$ such that
\begin{equation}\label{CV}
B_{r_i}(x_0)\subset E\subset B_{r_e}(x_0),\qquad r_e -r_i \le C \left(\max_{x\in \partial E} \mathcal H_{\partial E}(x) -\min_{x\in \partial E} \mathcal H_{\partial E}(x)\right).
\end{equation}
This result is obtained by a refined use of the moving plane method, and the constant $C$ depends on the $C^2$ norm of $\partial E$ (more precisely, on a uniform interior-ball condition).

After this,
Magnanini and Poggesi \cite{MP2020-2} were able to replace the $L^\infty$-oscillation of $\mathcal H_{\partial E}$ in \eqref{CV} with an $L^2$-oscillation, but at the price of a non-sharp power when $n> 3$ (see also  \cite{MP2019,MP2020} for stability estimates with an $L^1$-oscillation). 

\smallskip

In this paper we prove a sharp stability result for Alexandrov's Theorem in the class of sets that are Lipschitz-close to a sphere. This result is similar to \cite[Theorem 1.10]{KM2017} (see also \cite[Theorem 1.3]{MPS2022}). However, our proof provides a control on the difference between an arbitrary constant $\mu$ and $n-1$ when measuring the oscillations of the mean curvature around $\mu$. This difference, although small at first sight, is essential for our application.

It is interesting to notice that Theorem~\ref{mean curvature stability} implies stability estimates for Alexandrov's Theorem that improve the results mentioned above (see Corollary~\ref{cor:Alex} and Remark~\ref{rmk:Alex} below). The experts probably knew these consequences (as they also follow from \cite[Theorem 1.10]{KM2017}), but we could not find them in the literature, so we decided to present them here for the convenience of the reader.

To state the result, we first define the class of nearly spherical sets (recall the notation introduced in \eqref{eq:Bv}).

\begin{defn}
\label{def:spherical}
Let $\sigma>0$ be a small constant.
A set $E\subset \R^n$ is said $\sigma$-nearly spherical
if $E=B+u$ for some function $u:\mathbb S^{n-1} \to \R$ satisfying $\|u\|_{W^{1,\infty}(\mathbb S^{n-1})}\leq \sigma$.
\end{defn}
We also recall that $W^{-1,2}(\mathbb S^{n-1})$ denotes the dual space of $W^{1,2}(\mathbb S^{n-1})$, where
$$\|u\|_{W^{1,2}(\mathbb S^{n-1})}^2:=\|u\|_{L^{2}(\mathbb S^{n-1})}^2+\|D_\tau u\|_{L^{2}(\mathbb S^{n-1})}^2,$$
Here and in the sequel, $D_\tau$ denotes the tangential gradient on the unit sphere. 

Now we can state our stability result of Alexandrov's Theorem for nearly spherical sets. 
\begin{thm}\label{mean curvature stability}
There exists $\sz=\sz(n)>0$ such that the following holds:
let $H=B+w$ be a $\sigma$-nearly spherical set such that
$$0=\int_{H} x\, dx,$$
and let $\mathcal H_{\partial H}$ denote  the (distributional) mean curvature of $\partial H$.
Assume that $\mathcal H_{\partial H} \in L^1(\partial H)$, and define
\begin{equation}
\label{eq:def S}
S(\omega):=\mathcal H_{\partial H}\big(\omega(1+w(\omega))\big)\qquad  \text{for $\mathscr H^{n-1}$-a.e. $\omega \in \mathbb S^{n-1}$}.
\end{equation}
Then, for any $\mu \in \R$,
\begin{equation}
\label{eq:W12 S}
\|w\|_{W^{1,2}(\mathbb S^{n-1})}+|\mu-(n-1)|\le C(n) \|S-\mu\|_{W^{-1,2}(\mathbb S^{n-1})}
\end{equation}
for some dimensional constant $C(n)>0.$ In particular
\begin{equation}
\label{eq:H B}
|H\Delta B|\leq C(n) \inf_{\mu \in \R}\|S-\mu\|_{W^{-1,2}(\mathbb S^{n-1})}.
\end{equation}
Now, assume in addition that $\|w\|_{C^{1,\alpha}} \leq M$ for some $\alpha, M>0$, and that 
$\mathcal H_{\partial H} \in L^p(\partial H)$ for some $p \in \left[\frac{2n-2}{n+1},\infty\right]$ with $p>1$. Then
\begin{equation}
\label{eq:W2p}
\|w\|_{W^{2,p}(\mathbb S^{n-1})} \leq C(n,p,\alpha,M)
\inf_{\mu \in \R} \|\mathcal H_{\partial H}-\mu\|_{L^{p}(\partial H)}\qquad \forall\,p \in \left[\frac{2n-2}{n+1},\infty\right) \text{ with }p>1,
\end{equation}
\begin{equation}
\label{eq:BMO}
\|D^2_{\tau}w\|_{BMO(\mathbb S^{n-1})}  \leq C(n,\alpha,M) \inf_{\mu \in \R}\|\mathcal H_{\partial H}-\mu\|_{L^{\infty}(\partial H)}.
\end{equation}
\end{thm}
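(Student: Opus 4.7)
The strategy is to linearize the mean curvature operator at the sphere and combine the spectral properties of the resulting operator with the barycenter and volume normalizations of $H$. Parametrizing $\partial H$ by $X(\omega)=\omega(1+w(\omega))$, a standard computation yields the pointwise expansion
$$
S(\omega)=(n-1)-\Delta_\tau w(\omega)-(n-1)w(\omega)+Q[w](\omega),
$$
where the nonlinear remainder $Q[w]$ collects terms at least quadratic in $(w,D_\tau w,D_\tau^2 w)$. To work at the low regularity level $w\in W^{1,\infty}$ with merely $\mathcal H_{\partial H}\in L^1(\partial H)$, I would use the weak form of this identity. Starting from
$$
\int_{\partial H}\mathcal H_{\partial H}\,(\zeta\cdot\nu)\,d\mathcal H^{n-1}=\int_{\partial H}\operatorname{div}_{\partial H}\zeta\,d\mathcal H^{n-1}
$$
applied to radial test fields $\zeta(x)=\varphi(x/|x|)\,x$, after changing variables to $\mathbb S^{n-1}$ one gets, for every $\varphi\in W^{1,2}(\mathbb S^{n-1})$,
$$
\int_{\mathbb S^{n-1}}(S-\mu)\varphi\,J_w\,d\sigma=\int_{\mathbb S^{n-1}}\bigl[D_\tau w\cdot D_\tau\varphi-(n-1)w\varphi\bigr]d\sigma-(\mu-(n-1))\int_{\mathbb S^{n-1}}\varphi J_w\,d\sigma+\mathcal R[w,\varphi],
$$
with $J_w=(1+w)^{n-1}$ and $|\mathcal R[w,\varphi]|\leq C\sigma\|w\|_{W^{1,2}}\|\varphi\|_{W^{1,2}}$.

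The spectrum of $L:=-\Delta_\tau-(n-1)$ on $\mathbb S^{n-1}$ equals $\{k(k+n-2)-(n-1):k\geq 0\}$: namely $-(n-1)$ on constants, $0$ on $\operatorname{span}\{x_1,\dots,x_n\}$, and $\geq n+1$ on the $L^2$-orthogonal complement $V^\perp$. The volume and barycenter normalizations linearize to $|\bar w|+\sum_i|\langle w,x_i\rangle_{L^2}|\leq C\|w\|_{L^2}^2$, so the neutral modes of $L$ are quadratically negligible. Testing the weak form against $\varphi$ equal to the $V^\perp$-projection of $w$ and using the coercivity of $L$ on $V^\perp$, I would obtain
$$
\|w^\perp\|_{W^{1,2}}^2\leq C\|S-\mu\|_{W^{-1,2}}\|w^\perp\|_{W^{1,2}}+C|\mu-(n-1)|\,\|w^\perp\|_{L^2}+C\sigma\|w\|_{W^{1,2}}^2,
$$
while testing instead against $\varphi=1$ gives an identity linking $\mu-(n-1)$, $\bar w$, and $\int_{\mathbb S^{n-1}}(S-\mu)$ modulo quadratic errors. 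Combining the two estimates and absorbing the $\sigma$-small terms into the left-hand side produces \eqref{eq:W12 S}; then the trivial bound $|H\Delta B|\leq C\|w\|_{L^1}\leq C\|w\|_{W^{1,2}}$ yields \eqref{eq:H B}.

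For \eqref{eq:W2p} and \eqref{eq:BMO}, the additional bound $\|w\|_{C^{1,\alpha}}\leq M$ lets me view the mean curvature equation as a uniformly elliptic nonlinear PDE on $\mathbb S^{n-1}$ whose linearization at $w=0$ is $L$. Writing
$$
L[w]=(S-\mu)-(\mu-(n-1))-Q[w],\qquad Q[w]=a_{ij}(\omega,w,D_\tau w)D_\tau^2 w+b(\omega,w,D_\tau w),
$$
with $a_{ij},b$ of small $C^\alpha$-norms (thanks to the $C^{1,\alpha}$-smallness of $w$), the standard Calder\'on--Zygmund $L^p$ theory for $L$ on $\mathbb S^{n-1}$, applied modulo its finite-dimensional kernel (pinned down by the barycenter condition), yields \eqref{eq:W2p}, and its $L^\infty$--$BMO$ endpoint gives \eqref{eq:BMO}; the threshold $p\geq\frac{2n-2}{n+1}$ is exactly the exponent for which $|D_\tau w|^2\in L^p(\mathbb S^{n-1})$ when $w\in W^{2,p}(\mathbb S^{n-1})$, via the Sobolev embedding $W^{1,p}(\mathbb S^{n-1})\hookrightarrow L^{(n-1)p/(n-1-p)}$. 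The hard part is the simultaneous presence of the degree-$1$ kernel of $L$ and the need to control $|\mu-(n-1)|$ rather than the easier $|\mu-\bar S|$, which forces a careful interplay between the barycenter condition, the volume normalization, the test-against-$\varphi=1$ identity, and the smallness of $\sigma$; retaining this freedom in $\mu$ is precisely the improvement over \cite[Theorem~1.10]{KM2017} that is needed for Theorem~\ref{main thm}.
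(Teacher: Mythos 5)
Your proposal is fundamentally the same strategy as the paper's: linearize the mean curvature operator at the sphere, exploit the spectral gap of $-\Delta_\tau-(n-1)$ by testing the weak equation against constants and against $w$ itself, use the volume and barycenter normalizations to kill the $k=0,1$ modes, and apply Calder\'on--Zygmund theory to the resulting uniformly elliptic equation for the $W^{2,p}$ and $BMO$ estimates. The only real bookkeeping difference is that the paper substitutes $\xi=\log(1+w)$ so as to use the \emph{exact} radial-graph identity
$e^\xi S=\frac{n-1}{\sqrt{1+|D_\tau\xi|^2}}-\operatorname{div}_\tau\bigl(\frac{D_\tau\xi}{\sqrt{1+|D_\tau\xi|^2}}\bigr)$
(which makes the nonlinear remainders transparent), tests it against $e^{-\xi}$ and $\xi$ rather than against $1$ and a projection of $w$, and packages the spectral decomposition as an improved Poincar\'e constant $>n-1$ for $\xi$; your version in terms of $w$ and an explicit $V\oplus V^\perp$ splitting is equivalent.

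One substantive error: your explanation of the threshold $p\ge\frac{2n-2}{n+1}$ is wrong. Requiring $|D_\tau w|^2\in L^p$ when $w\in W^{2,p}$, via $W^{1,p}(\mathbb S^{n-1})\hookrightarrow L^{(n-1)p/(n-1-p)}$, gives $p\ge\frac{n-1}{2}$, which coincides with $\frac{2n-2}{n+1}$ only when $n=3$ and is strictly larger for $n\ge4$. The actual origin of the exponent is the dual Sobolev embedding $L^{\frac{2n-2}{n+1}}(\mathbb S^{n-1})\subset W^{-1,2}(\mathbb S^{n-1})$ (dual to $W^{1,2}\hookrightarrow L^{\frac{2n-2}{n-3}}$): the Calder\'on--Zygmund estimate leaves a lower-order term controlled by $\|w\|_{W^{1,2}}$ (and $|\mu-(n-1)|$), which by the first part of the theorem is $\lesssim\|R\|_{W^{-1,2}}$, and absorbing this into an $L^q$ norm of $R$ forces $q\ge\frac{2n-2}{n+1}$. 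So the restriction on $p$ comes from the dualized first-order estimate, not from the quadratic gradient terms in the nonlinearity.
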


\begin{rem}
The assumption $\mathcal H_{\partial H} \in L^1(\partial H)$ is only used to define $S$ in \eqref{eq:def S}.
However this is not strictly need, since for a nearly spherical set $H$ it is possible to define $S \in W^{-1,2}(\mathbb S^{n-1})$ distributionally using formula \eqref{eq:mean}. So, by approximation, \eqref{eq:W12 S} and \eqref{eq:H B} hold for any $\sigma$-nearly spherical set with $S$ defined as in \eqref{eq:mean}, without needing any assumption on  $\mathcal H_{\partial H}$.
\end{rem}

A direct consequence of  Theorem~\ref{mean curvature stability} is the following corollary, which improves the stability results for the Alexandrov's Theorem contained in \cite{CV2018,MP2019,MP2020,MP2020-2}.

\begin{cor}\label{cor:Alex}
Let $p \in (n-1,\infty]$.
There exists a constant $\eta=\eta(n,p)>0$, depending only on $n$ and $p$, such that the following holds:
Let $E\subset \R^n$ satisfy $|E\Delta B|\leq \eta$ and assume that the mean curvature $\mathcal H_{\partial E}$ of $\partial E$ belongs to $L^p(\partial E)$.
Then there exist two concentric balls $B_{r_i}(x_0)$ and $B_{r_e}(x_0)$ such that
$$
B_{r_i}(x_0)\subset E\subset B_{r_e}(x_0),\qquad r_e-r_i \leq C(n,p)\inf_{\mu \in \R}\|\mathcal H_{\partial E} - \mu\|_{L^p(\partial E)},
$$
for some constant $C(n,p)$ depending only on $p$ and $n$.

Moreover, $E-x_0=B+w$ is a nearly spherical set diffeomorphic to a sphere, with
$$
\|w\|_{C^1(\mathbb S^{n-1})}\leq C(n,p)\inf_{\mu \in \R}\|\mathcal H_{\partial E} - \mu\|_{L^p(\partial E)}.
$$
\end{cor}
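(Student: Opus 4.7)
The strategy is to reduce Corollary~\ref{cor:Alex} to Theorem~\ref{mean curvature stability} via a preliminary regularity step that brings $E$ into the framework of nearly spherical sets with a uniform $C^{1,\alpha}$ bound.

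First, I would show that, after a suitable translation, $E-x_0$ can be written as a normal graph over $\partial B$. Since $|E\Delta B|\le \eta$ is small and $\mathcal H_{\partial E}\in L^p(\partial E)$ with $p>n-1$, the classical $\varepsilon$-regularity theory for sets with prescribed mean curvature in $L^p$ (via Allard's theorem together with the elliptic bootstrap for the nonparametric graph equation) implies that $\partial E$ is a $C^{1,\alpha}$ hypersurface diffeomorphic to $\mathbb S^{n-1}$, for some $\alpha=\alpha(n,p)\in(0,1)$. Taking $x_0$ to be the barycenter of $E$, one obtains $E-x_0=B+w$ with $w\in C^{1,\alpha}(\mathbb S^{n-1})$ and $\|w\|_{C^{1,\alpha}(\mathbb S^{n-1})}\le M=M(n,p)$; the barycenter condition $\int_{B+w}x\,dx=0$ holds by construction. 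By taking $\eta(n,p)$ sufficiently small one can also ensure $\|w\|_{W^{1,\infty}(\mathbb S^{n-1})}\le \sigma(n)$, with $\sigma$ the threshold from Theorem~\ref{mean curvature stability}, so that $E-x_0$ is a $\sigma$-nearly spherical set in the sense of Definition~\ref{def:spherical}.

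With this reduction in hand, I would apply Theorem~\ref{mean curvature stability}. The hypothesis $p>n-1$ forces $p>\frac{2n-2}{n+1}$ (equivalent to $(n-1)^2>0$), so for $p\in(n-1,\infty)$ estimate \eqref{eq:W2p} gives
$$\|w\|_{W^{2,p}(\mathbb S^{n-1})}\le C(n,p)\,\inf_{\mu\in\mathbb R}\|\mathcal H_{\partial E}-\mu\|_{L^p(\partial E)},$$
where the dependence on $\alpha$ and $M$ is absorbed into $C(n,p)$ through the preceding step. The case $p=\infty$ is handled by applying the previous inequality with any finite $q>n-1$ and invoking $\|\cdot\|_{L^q(\partial E)}\le C\|\cdot\|_{L^\infty(\partial E)}$. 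Since $p>n-1$, the Morrey-type Sobolev embedding $W^{2,p}(\mathbb S^{n-1})\hookrightarrow C^1(\mathbb S^{n-1})$ then upgrades this to
$$\|w\|_{C^1(\mathbb S^{n-1})}\le C(n,p)\,\inf_{\mu\in\mathbb R}\|\mathcal H_{\partial E}-\mu\|_{L^p(\partial E)},$$
which is the second assertion of the corollary.

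Finally, setting $r_i:=1+\min_{\mathbb S^{n-1}}w$ and $r_e:=1+\max_{\mathbb S^{n-1}}w$, the definition of $B+w$ gives $B_{r_i}(x_0)\subset E\subset B_{r_e}(x_0)$, and
$$r_e-r_i=\max_{\mathbb S^{n-1}}w-\min_{\mathbb S^{n-1}}w\le 2\|w\|_{C^0(\mathbb S^{n-1})}\le C(n,p)\,\inf_{\mu\in\mathbb R}\|\mathcal H_{\partial E}-\mu\|_{L^p(\partial E)},$$
which completes the proof. The main obstacle is the first step, namely producing the uniform $C^{1,\alpha}$ graph parametrization from the $L^1$-closeness and the $L^p$ mean curvature bound; once this is in place, the rest is a direct consequence of Theorem~\ref{mean curvature stability} followed by Sobolev embedding.
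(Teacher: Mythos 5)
Your proposal follows essentially the same route as the paper's own proof: an initial $\varepsilon$-regularity step (Allard/Simon) producing a $C^{1,\alpha}$ graph parametrization $E-x_0=B+w$ with the barycenter at the origin, followed by an application of \eqref{eq:W2p} and the Morrey--Sobolev embedding $W^{2,p}(\mathbb S^{n-1})\hookrightarrow C^1(\mathbb S^{n-1})$ for $p>n-1$, and then reading off $r_i,r_e$ from $\min w$ and $\max w$. The only difference is that you explicitly treat the case $p=\infty$ by passing to a finite exponent $q\in(n-1,\infty)$ before invoking \eqref{eq:W2p}, a small point the paper leaves implicit.
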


\begin{rem}
\label{rmk:Alex}
Corollary~\ref{cor:Alex} can be combined with results contained in the literature to obtain new sharp stability estimates for the Alexandrov's Theorem.
For instance, it follows from \cite{DMMN2018} that if
 $$\|\mathcal H_{\partial E}-(n-1)\|_{L^2(\partial E)} \ll 1$$
then $E$ is $L^1$-close to a finite union of unit spheres.
Hence, if $E$ satisfies some additional assumptions that prevent bubbling (e.g., either assuming that $P(E)<2P(B_1)$, or $E$ satisfies an interior cone condition), then we know that $E$ is $L^1$-close to a single sphere. Thus, if $\mathcal H_{\partial E}$ is close to a constant in $L^p$ for some $p>n-1$, Corollary~\ref{cor:Alex} can be applied to obtain a sharp stability estimate. In particular, this allows us to improve the results obtained in \cite{CV2018,MP2019,MP2020,MP2020-2}.

It is worth observing that our method is robust, and can likely be extended to the setting of general smooth elliptic integrand considered in \cite{DMMN2018}. Also, it may be used to improve the convergence results in \cite{FM2011} for liquid drops of small mass lying at equilibrium under the action of  a potential energy.
\end{rem}

\medskip

The paper is organized as follows. In Section~\ref{sect:Alex} we prove our stability results for the Alexandrov's Theorem, namely Theorem~\ref{mean curvature stability} and Corollary~\ref{cor:Alex}. 
 Then, in Section~\ref{sect:nearly} we prove a version  of Theorem~\ref{main thm} in the nearly spherical case, i.e.,  Proposition~\ref{nearly spherical}, and we comment on the optimality of the result.
Finally, the proof of  Theorem~\ref{main thm} is presented in Section~\ref{sect:general}. In the appendix we prove \eqref{stability plane} and \eqref{stability plane2}.

\medskip
{\noindent \bf Notation.}  We often write positive
constants as $C(\cdot)$ and $c(\cdot)$, with
the parentheses including all the parameters on which the constant depends, and simply write $C$ or $c$ if the constant is absolute.  Usually $C(\cdot)$ denotes a constant larger than $1$, and $c(\cdot)$ for a constant less than $1$.
The constant $C(\cdot)$ may
vary between appearances, even within a chain of inequalities.

The Euclidean ball centered at $x$ with radius $r$ is denoted by $B_r(x)$, and  the unit ball centered at the origin is simply denoted by $B$. We denote by $\mathbb S^{n-1}$  the unit sphere,
and by $\mathscr H^{n-1}$ the $(n-1)$-dimensional Hausdorff measure. 
We recall that $D_\tau$ denotes the tangential gradient on the unit sphere. 

\medskip
{\noindent \it Acknowledgments.} The authors are thankful to Francesco Maggi for useful comments on a preliminary version of this paper.

\section{Stability estimates for Alexandrov's Theorem}\label{sect:Alex}
In this section we prove our stability results for Alexandrov's Theorem, namely Theorem~\ref{mean curvature stability} and Corollary \ref{cor:Alex}.

\subsection{Proof of Theorem~\ref{mean curvature stability}}
To simplify the calculation, we define $\xi:=\log(1+w)$ and  note that $\partial H=\left\{\mathbb S^{n-1}\ni \omega \mapsto e^{\xi (\omega)} \omega\right\}$.  Then the mean curvature of $\partial H$ satisfies the following identity in the weak sense:
\begin{equation}\label{eq:mean}
e^\xi  S=  \frac {n-1} {\sqrt{1+|D_\tau \xi|^2}} -{\rm div}_\tau \bigg(\frac{D_\tau \xi}{\sqrt{1+|D_\tau \xi|^2}}\bigg),
\end{equation}
where $S(\omega)=\mathcal H_{\partial H}\big( e^{\xi(\omega)}\omega\big)$ is the mean curvature read on the sphere;
see \cite[Formula (4)]{L2003}\footnote{Note that our definition of mean curvature equals to $(n-1)H$ in \cite{L2003}. Also, the dimension $n$ in \cite{L2003} corresponds to $n-1$ in our setting.} for more details.
Note that, since $\xi \in W^{1,\infty}(\mathbb S^{n-1})$ by assumption, it follows from \eqref{eq:mean} that $S \in W^{-1,2}(\mathbb S^{n-1})$.

Let $\mu\in \R$ be an arbitrary constant, and set $R:=S-\mu$. Then, 
in the current notation, \eqref{eq:mean} becomes
\begin{equation}\label{xi}
\frac {n-1} {\sqrt{1+|D_\tau \xi|^2}} -{\rm div}_\tau \bigg(\frac{D_\tau \xi}{\sqrt{1+|D_\tau \xi|^2}}\bigg) -\mu e^{\xi}- R e^{\xi}=0.
\end{equation}
Hence, testing this equation against $e^{-\xi}$ we obtain
$$
|\mathbb S^{n-1}|\big(\mu-(n-1)\big)=\int_{\mathbb S^{n-1}} \biggl(- \frac{|D_\tau \xi|^2}{\sqrt{1+|D_\tau \xi|^2}}e^{-\xi}+ (n-1)\biggl(\frac {1} {\sqrt{1+|D_\tau \xi|^2}}  e^{-\xi}-1\biggr)  + R\biggr) \,d\mathscr H^{n-1}.
$$
Thus, since $\|\xi\|_{W^{1,\infty}}\le \sz$ and 
$$\left|\int_{\mathbb S^{n-1}} R\,d\mathscr H^{n-1}\right| \leq \|1\|_{W^{1,2}(\mathbb S^{n-1})}\|R\|_{W^{-1,2}(\mathbb S^{n-1})}=C(n)\|R\|_{W^{-1,2}(\mathbb S^{n-1})}, $$
we get
\begin{align*}
\big|\mu-(n-1)\big| &\leq C(n)\Bigl(\|D_\tau \xi\|_{L^2(\mathbb S^{n-1})}^2+ \|R\|_{W^{-1,2}(\mathbb S^{n-1})}\Bigr)\\
&\qquad
+C(n)\int_{\mathbb S^{n-1}} \biggl|\frac {1} {\sqrt{1+|D_\tau \xi|^2}} -1\biggr|e^{-\xi}  \, d\mathscr H^{n-1} +C(n)\biggl| \int_{\mathbb S^{n-1}} \big(e^{-\xi}-1\big) \, d\mathscr H^{n-1}\biggr|
\\
& \leq C(n)\Bigl(\|D_\tau \xi\|_{L^2(\mathbb S^{n-1})}^2+\|\xi\|_{L^2(\mathbb S^{n-1})}^2+ \|R\|_{W^{-1,2}(\mathbb S^{n-1})}\Bigr) +C(n)\biggl| \int_{\mathbb S^{n-1}} \xi \, d\mathscr H^{n-1}\biggr|,
\end{align*} 
where we used that $e^{-\xi}-1=-\xi+O(\xi^2)$.
Also, since $|H|=|B|$, we have 
$$\left|\int_{\mathbb S^{n-1}} \xi \, d\mathscr H^{n-1}\right|  \leq  \left(\frac{n-1}{2} +O(\sz)\right) \int_{\mathbb S^{n-1}} \xi^2 \, d\mathscr H^{n-1},$$
see \cite[Step 1, Theorem 3.1]{F2017}.\footnote{In \cite{F2017} the author shows that $$\label{same volume}\left|\int_{\mathbb S^{n-1}} w \, d\mathscr H^{n-1}\right| \leq  \left(\frac{n-1}{2} +O(\sz)\right) \int_{\mathbb S^{n-1}} w^2 \, d\mathscr H^{n-1}.$$
However, since $\xi=\log(1+w)=w+O(w^2)$, we can replace $w$ with $\xi$.} 
Hence, we have proved that
\begin{equation}
\label{eq:mu}
\big|\mu-(n-1)\big| \leq C(n)\Bigl(\|\xi\|_{W^{1,2}(\mathbb S^{n-1})}^2+ \|R\|_{W^{-1,2}(\mathbb S^{n-1})}\Bigr).
\end{equation}
Testing now \eqref{xi} against $\xi$, and using again that $\|\xi\|_{W^{1,\infty}} \leq \sigma$,
thanks to \eqref{eq:mu} we get
\begin{align*}
&\int_{\mathbb S^{n-1}} \biggl( \frac{|D_\tau  \xi|^2}{\sqrt{1+|D_\tau \xi|^2}} -(n-1)\big(e^\xi-1\big) \xi \biggr)d\mathscr H^{n-1}\\
 &= \int_{\mathbb S^{n-1}} \xi e^{\xi}R\,d\mathscr H^{n-1}  + (n-1) \int_{\mathbb S^{n-1}} \biggl(1-\frac {1} {\sqrt{1+|D_\tau \xi|^2}}\biggr)\xi \,d\mathscr H^{n-1}
+\big(\mu-(n-1)\big)\int_{\mathbb S^{n-1}}\xi e^{\xi}\,d\mathscr H^{n-1}\\
&\leq  \|\xi  e^{\xi}\|_{W^{1,2}(\mathbb S^{n-1})}\|R\|_{W^{-1,2}(\mathbb S^{n-1})}+ C(n)\sigma \|D_\tau \xi \|_{L^2(\mathbb S^{n-1})}^2\\
&\qquad\qquad +C(n)\Bigl( \|\xi\|_{W^{1,2}(\mathbb S^{n-1})}^2+ \|R\|_{W^{-1,2}(\mathbb S^{n-1})}\Bigr)\|\xi \|_{L^1(\mathbb S^{n-1})}\\
& \leq C(n) \|\xi \|_{W^{1,2}(\mathbb S^{n-1})}\|R\|_{W^{-1,2}(\mathbb S^{n-1})}+ C(n)\sigma \|\xi\|_{W^{1,2}(\mathbb S^{n-1})}^2.
\end{align*}
On the other hand, since $\big(e^\xi-1\big) \xi =\xi^2+O(\xi^3)$, we can bound the first term above from below as
\begin{multline*}
\int_{\mathbb S^{n-1}} \biggl( \frac{|D_\tau  \xi|^2}{\sqrt{1+|D_\tau \xi|^2}} -(n-1)\big(e^\xi-1\big) \xi \biggr)d\mathscr H^{n-1}\\
 \geq \sqrt{1-\sigma^2}\int_{\mathbb S^{n-1}}|D_\tau  \xi|^2\,
 d\mathscr H^{n-1} - \big((n-1)+C(n)\sigma\big)\int_{\mathbb S^{n-1}} \xi^2\,
 d\mathscr H^{n-1}.
 \end{multline*}
Hence, combining the two inequalities above, we conclude that
\begin{multline}
\label{eq:Poincare}
(1- C(n)\sigma)\int_{\mathbb S^{n-1}}  {|D_\tau \xi|^2} \,d\mathscr H^{n-1}-  \big(n-1 +C(n)\sz\big) \int_{\mathbb S^{n-1}} \xi^2 \,d\mathscr H^{n-1}\\
\le C(n)  \|\xi \|_{W^{1,2}(\mathbb S^{n-1})}\|R\|_{W^{-1,2}(\mathbb S^{n-1})}.
\end{multline}
We now observe that, since $H$ and $B$ have the same volume and the same barycenter, 
if $\{Y_0,\ldots,Y_n\}$ denote the eigenfunctions corresponding to the first two eigenvalues of the Laplace-Beltrami operator on the sphere, then
$$
\biggl|\int_{\mathbb S^{n-1}} Y_i\,\xi\,d\mathscr H^{n-1}\biggr| \leq C(n)\sigma  \|\xi \|_{L^2(\mathbb S^{n-1})}\qquad \text{for }i=0,\ldots,n,
$$
see e.g. \cite[Step 1, Theorem 3.1]{F2017}.\footnote{As already noted before, in \cite{F2017} these properties are shown for $w$ in place of $\xi$, but since $\xi=w+O(w^2)$, we can replace $w$ with $\xi$.}
 This implies that, for $\sigma \ll1 $, the Poincar\'e constant for $\xi$ is strictly larger than $n-1+\zeta $ for some $\zeta=\zeta(n)>0$, namely
 $$
 \int_{\mathbb S^{n-1}}  {|D_\tau \xi|^2} \,d\mathscr H^{n-1}\geq (n-1+\zeta) \int_{\mathbb S^{n-1}} \xi^2 \,d\mathscr H^{n-1}.
 $$
Thus, combining this inequality with  \eqref{eq:Poincare} we finally obtain
 $$
\|\xi\|_{W^{1,2}(\mathbb S^{n-1})}^2 \le C(n)   \|\xi \|_{W^{1,2}(\mathbb S^{n-1})}\|R\|_{W^{-1,2}(\mathbb S^{n-1})},$$
or equivalently
\begin{equation}
\label{eq:W12 R}
\|\xi\|_{W^{1,2}(\mathbb S^{n-1})} \le C(n) \|R\|_{W^{-1,2}(\mathbb S^{n-1})}.
\end{equation}
Since $\xi:=\log(1+w)$ and $R=S-\mu$ with $\mu$ arbitrary, \eqref{eq:W12 R} and \eqref{eq:mu} imply \eqref{eq:W12 S}.

Finally, since
$$
|H\Delta B|=\int_{\mathbb S^{n-1}}  |(1+ w)^n-1|\, d\mathscr H^{n-1} \leq C(n)\|w\|_{L^1(\mathbb S^{n-1})} \leq C(n)\|w\|_{L^2(\mathbb S^{n-1})},
$$
\eqref{eq:H B} follows from \eqref{eq:W12 S}.

To prove higher regularity, 
we rewrite \eqref{eq:mean} as 
$$
{\rm div}_\tau \bigg(\frac{D_\tau w}{\sqrt{(1+w)^2+|D_\tau w|^2}}\bigg)+(n-1)\biggl(1-\frac {1+w} {\sqrt{(1+w)^2+|D_\tau w|^2}} \biggr) +\mu w=(n-1)-\mu- (1+w)R .
$$
Since $1-\frac {1} {\sqrt{1+|z|^2}}=\int_0^1 \frac{|z|^2}{(1+t|z|^2)^{3/2}}\,dt$,
setting 
$z=\frac{D_\tau w}{1+w}$ we get
$$
1-\frac {1+w} {\sqrt{(1+w)^2+|D_\tau w|^2}}=\int_0^1 \frac{(1+w)|D_\tau w|^2}{((1+w)^2+t|D_\tau w|^2)^{3/2}}\,dt
=\bigg(\int_0^1 \frac{(1+w)D_\tau w}{((1+w)^2+t|D_\tau w|^2)^{3/2}}\,dt\biggr)\cdot D_\tau w
$$
Also, computing the divergence, we have
\begin{multline*}
{\rm div}_\tau \bigg(\frac{D_\tau w}{\sqrt{(1+w)^2+|D_\tau w|^2}}\bigg)
=\frac{\big(1+w)^2+|D_\tau w|^2\big)\Delta_\tau w - D^2_{\tau}w [D_\tau w,D_\tau w]-(1+w)|D_\tau w|^2}{\big((1+w)^2+|D_\tau w|^2\big)^{3/2}}\\
=\frac{\big(1+w)^2+|D_\tau w|^2\big)\Delta_\tau w - D^2_{\tau}w [D_\tau w,D_\tau w]}{\big((1+w)^2+|D_\tau w|^2\big)^{3/2}}
-\frac{(1+w)D_\tau w}{\big((1+w)^2+|D_\tau w|^2\big)^{3/2}}\cdot D_\tau w.
\end{multline*}
Hence, $w$ solves a uniformly elliptic equation of the form
\begin{equation}
\label{eq:PDE xi}
A(w,D_\tau w):D^2_{\tau} w +b(w,D_\tau w)\cdot D_\tau w+\mu w=(n-1)-\mu- (1+w)R,
\end{equation}
where the coefficients are smooth in the range of $w$ (recall that $\|w\|_{W^{1,\infty}} \leq \sigma \ll 1$) and given by
$$
A(s,z):=\frac{\big((1+s)^2+|z|^2\big){\rm Id }- z\otimes z }{\big((1+s)^2+|z|^2\big)^{3/2}},
$$
$$
b(s,z):=(n-1)\int_0^1\frac{(1+s)z}{((1+s)^2+t|z|^2)^{3/2}}\,dt -\frac{(1+s)z}{((1+s)^2+|z|^2)^{3/2}}.
$$
These considerations are particularly useful when $\|w\|_{W^{1,\infty}} \leq \sigma \ll 1$ also satisfies $\|w\|_{C^{1,\alpha}} \leq M$ for some $\alpha, M>0$. 
Indeed, in this case $w$ solves a uniformly elliptic equations with H\"older coefficients.
So it follows from Calderon-Zygmund Theory that
$$
\|w\|_{W^{2,p}(\mathbb S^{n-1})} \leq C(n,p,\alpha,M)\Big(\|\xi\|_{L^{1}(\mathbb S^{n-1})}+ \|(n-1)-\mu- (1+w)R \|_{L^p(\mathbb S^{n-1})}\Big)\qquad \forall\,p \in (1,\infty),
$$
$$
\|D^2_{\tau}\xi\|_{BMO(\mathbb S^{n-1})} \leq C(n,\alpha,M)\Big(\|\xi\|_{L^{1}(\mathbb S^{n-1})}+ \|(n-1)-\mu- (1+w)R\|_{L^\infty(\mathbb S^{n-1})}\Big).
$$
Recalling \eqref{eq:W12 R}, we conclude from Sobolev's embedding\footnote{Since $W^{1,2}(\mathbb S^{n-1})\subset L^{\frac{2n-2}{n-3}}(\mathbb S^{n-1})$, we have the dual inclusion $L^{\frac{2n-2}{n+1}}(\mathbb S^{n-1}) \subset W^{-1,2}(\mathbb S^{n-1})$.} that 
\begin{align*}
\|w\|_{W^{2,p}(\mathbb S^{n-1})} &\leq C(n,p,\alpha,M) \big(\|R\|_{L^{p}(\mathbb S^{n-1})} + \|R\|_{W^{-1,2}(\mathbb S^{n-1})}\big)\\
&\leq C(n,p,\alpha,M) \|R\|_{L^{\max\left\{p,\frac{2n-2}{n+1}\right\}}(\mathbb S^{n-1})} \qquad \forall\,p \in (1,\infty),
\end{align*}
$$
\|D^2_{\tau}w\|_{BMO(\mathbb S^{n-1})}  \leq C(n,\alpha,M)\|R\|_{L^\infty(\mathbb S^{n-1})}.
$$
Note now that, since $w$ is uniformly Lipschitz, integrations over $\mathbb S^{n-1}$ or over $\partial H$ are equivalent up to dimensional multiplicative constants. Hence,
the estimates above imply \eqref{eq:W2p} and \eqref{eq:BMO}, concluding the proof of the theorem. \qed

\subsection{Proof of Corollary \ref{cor:Alex}}
Since $E$ is $\eta$-close in $L^1$ to a ball and its mean curvature belongs to $L^p$ with $p>n-1$,
the regularity theory for almost-minimal hypersurfaces (see for instance \cite[Chapter 5]{S1983}) implies that
$E=B+u$ is a nearly spherical set for some function $u$ satisfying $\|u\|_{C^{1,\alpha}(\mathbb S^{n-1})}=o_\eta(1)$  with $\az=\az(n)>0$, where $o_\eta(1)$ denotes a dimensional constant that goes to $0$ as $\eta \to 0$. In particular, if $\eta$ is small enough we can find a translation $x_0$, with $|x_0|=o_\eta(1)$, such that $H:=E-x_0$ is a nearly spherical set of the form $B+w$ with barycenter at the origin, where $\|w\|_{C^{1,\alpha}(\mathbb S^{n-1})}\leq o_\eta(1)$.

Thus, if $\eta=\eta(n,p)$ is sufficiently small, we can apply \eqref{eq:W2p} and Sobolev's embedding 
to deduce that 
$$
\|w\|_{C^1(\mathbb S^{n-1})} \leq C(n,p) \|w\|_{W^{2,p}(\mathbb S^{n-1})}\leq C(n,p)\inf_{\mu\in \R}\|\mathcal H_{\partial H}-\mu\|_{L^{p}(\partial H)}.
$$
Since $B_{1-\|w\|_{L^\infty(\mathbb S^{n-1})}} \subset H\subset B_{1+\|w\|_{L^\infty(\mathbb S^{n-1})}}$, this concludes the proof. \qed

\section{Nearly spherical case}
\label{sect:nearly}
In this section we first prove Theorem~\ref{main thm}  in the case when $E$ is a $\sigma$-nearly spherical set (recall Definition~\ref{def:spherical}). Then we show that our choice of $f$ is essentially optimal by proving that, for $n \geq 3$, $f$ cannot be chosen to be linear near the origin.

\subsection{A version of Theorem~\ref{main thm} in the nearly spherical case }
We have the following proposition, that corresponds to Theorem~\ref{main thm} in the nearly spherical case.

\begin{prop}\label{nearly spherical}
Let $f$ be defined as in \eqref{def:f}.
There exist small dimensional constants $\overline\sz(n)>0$ and $\overline\lambda(n)>0$,
and a dimensional constant $\overline C(n)>1$, such that 
the following holds: Let $\sigma \in (0,\overline\sz(n))$, $\lambda \in (0,\overline\lambda(n))$, and let $E$ be a $\sigma$-nearly spherical set with $|E|=|B|$. Then one can find a point $x_0\in \mathbb R^n$ and a uniformly convex set of class $C^2$ of the form
$$F=x_0+(B+v),$$ 
with $\|v\|_{C^1(\mathbb S^{n-1})} \leq \overline C(n)\frac{\lambda}{|\log\sigma|}$, $\|D^2_{\tau}v\|_{C^0(\mathbb S^{n-1})} \leq \overline C(n)\lambda$, and $|F|=|E|$, such that
\begin{equation}\label{near sphere}
P(E)-P(F)\ge \lambda  \,f\big(|E\Delta F|\big).
\end{equation}
\end{prop}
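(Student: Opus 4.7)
The plan is to construct $F$ as a smoothly truncated spectral approximation of $E$ at frequency scale $K\sim|\log\sigma|$, and then apply a Fuglede-type quadratic expansion of the perimeter. First I would translate $E$ so that its barycenter lies at the origin and write $E=B+u$ with $\|u\|_{W^{1,\infty}}\le\sigma$. The volume and centering constraints then force $\bigl|\int u\,d\mathscr H^{n-1}\bigr|+\sum_{i=1}^n\bigl|\int u\,\omega_i\,d\mathscr H^{n-1}\bigr|\lesssim\sigma^2$, so Fuglede's coercive estimate
\[
Q(w):=\|D_\tau w\|_{L^2(\mathbb S^{n-1})}^2-(n-1)\|w\|_{L^2(\mathbb S^{n-1})}^2\ \ge\ c(n)\|w\|_{W^{1,2}(\mathbb S^{n-1})}^2
\]
is available for $w=u$.

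Next, I would set $K:=c_0(n)|\log\sigma|$ for a small dimensional constant $c_0$, let $P_K$ be a smooth Ces\`aro-type projection onto spherical harmonics of degree at most $K$ (so $\|P_K u\|_{L^\infty}\le C\|u\|_{L^\infty}\lesssim\sigma$), and define $v:=\alpha\,P_K u$ with $\alpha:=\min\bigl(1,\lambda/(\overline C(n)\,K^2\sigma)\bigr)$. By Bernstein's inequality on $\mathbb S^{n-1}$, this choice guarantees $\|v\|_{C^1(\mathbb S^{n-1})}\le \overline C(n)\lambda/|\log\sigma|$ and $\|D^2_\tau v\|_{C^0(\mathbb S^{n-1})}\le \overline C(n)\lambda$. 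A small adjustment to $v$ by a constant and a first-order spherical harmonic (of size $O(\|v\|_{L^\infty}^2)$), together with the choice of $x_0$, restores $|F|=|E|$ and aligns the barycenter; and for $\lambda$ small the smallness of $\|v\|_{C^2}$ makes $F:=x_0+(B+v)$ uniformly convex and hence in $\mathscr C_\lambda$.

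The central estimate is then the Fuglede expansion
\[
P(E)-P(F)=\tfrac12\bigl(Q(u)-Q(v)\bigr)+O\bigl(\sigma\|u\|_{W^{1,2}}^2\bigr).
\]
Decomposing $u=\sum_k u^{(k)}$ in spherical harmonics, and using orthogonality together with $\lambda_k-(n-1)\gtrsim k^2$ for $k\ge 2$, I would obtain $Q(u)-Q(v)\gtrsim K^2\|u-v\|_{L^2}^2$. Since $|E\Delta F|\approx\|u-v\|_{L^1}\le C\|u-v\|_{L^2}$ by Cauchy-Schwarz on $\mathbb S^{n-1}$, this yields $P(E)-P(F)\gtrsim K^2|E\Delta F|^2$. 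Because $|E\Delta F|\le C\sigma$ (both $|E\Delta B|$ and $|F\Delta B|$ are $\lesssim\sigma$), we have $|\log|E\Delta F||\ge\tfrac12|\log\sigma|$, so $\lambda f(|E\Delta F|)\le 2\lambda|E\Delta F|/|\log\sigma|$ and the desired inequality reduces to the lower bound $|E\Delta F|\gtrsim \lambda/(K^2|\log\sigma|)=\lambda/|\log\sigma|^3$.

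The main obstacle I expect is handling the small-$|E\Delta F|$ regime where this lower bound fails. In that regime $\|u-v\|_{L^1}$ is tiny, and combined with $\|v\|_{L^1}\lesssim\alpha\sigma\lesssim\lambda/|\log\sigma|^2$ this forces $\|u\|_{L^1}$ itself to be $\lesssim\lambda/|\log\sigma|^2$, so $E$ is extremely close to $B$. To treat this case I would either replace the construction by a more accurate approximation $v$ within the allowed class (pushing $F$ so close to $E$ that the logarithmic structure of $f$ makes the RHS decay faster than the LHS), or work directly with $F=B$ exploiting the interplay between Fuglede's coercivity, the $L^\infty$-bound on $u$, and a refined Cauchy-Schwarz/interpolation argument. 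The calibration $K\sim|\log\sigma|$ is exactly what produces the logarithmic factor in $f$; any substantially different choice of $K$ would weaken either the smoothness bounds on $v$ or the spectral coercivity and worsen the final rate.
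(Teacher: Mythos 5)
Your proposal takes a genuinely different route from the paper's: you try a \emph{direct} construction via a damped spectral truncation $v=\alpha P_Ku$ at frequency $K\sim|\log\sigma|$ followed by a Fuglede-type quadratic expansion, whereas the paper first solves the penalized problem $\min\{P(G)+2\lambda f(|E\Delta G|):|G|=|B|\}$, extracts the linear lower bound $P(E)-P(H)\geq 2\lambda f(|E\Delta H|)$ for free from minimality, and then regularizes the minimizer $H$ by a heat-kernel mollification whose scale $\varepsilon=|E\Delta H|^6$ plays the role of your $1/K$. The structural advantage of the paper's route is that the penalization manufactures the correct \emph{first} power of $|E\Delta H|$, so Fuglede's quadratic coercivity never needs to be converted into a linear estimate; your route has to perform exactly that conversion, and this is where the gaps lie.

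Concretely, your central spectral claim $Q(u)-Q(v)\gtrsim K^2\|u-v\|_{L^2}^2$ is false for the construction you propose whenever $\alpha<1$ (i.e. whenever $\lambda<\overline C(n)K^2\sigma$, which for $K\sim|\log\sigma|$ is a large portion of the admissible range). Indeed, with $v=\alpha P_Ku$ one has, on the span of harmonics with $2\le k\le K$,
\[
Q(u)-Q(v)=(1-\alpha^2)\sum_{2\le k\le K}(\lambda_k-(n-1))\|u^{(k)}\|_{L^2}^2,
\qquad
\|u-v\|_{L^2}^2\ge(1-\alpha)^2\sum_{2\le k\le K}\|u^{(k)}\|_{L^2}^2 .
\]
Taking $u$ a pure degree-$2$ harmonic, the left side is $\sim(1-\alpha)\|u\|_{L^2}^2$ while the right side is $\sim(1-\alpha)^2\|u\|_{L^2}^2$, and the claimed inequality would force $K^2(1-\alpha)\lesssim 1$, which need not hold. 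The spectral gain $\lambda_k-(n-1)\gtrsim K^2$ is available only for $k>K$, not on the low-frequency part of $u-v$ that the damping $\alpha<1$ creates. Secondly, the ``main obstacle'' you acknowledge in the small-$|E\Delta F|$ regime is genuine and your proposed fixes do not close it: taking $F=B$ needs $P(E)-P(B)\gtrsim\lambda\|u\|_{L^1}/|\log\|u\|_{L^1}|$, but Fuglede only gives $P(E)-P(B)\gtrsim\|u\|_{W^{1,2}}^2\ge\|u\|_{L^1}^2$, which fails as $\|u\|_{L^1}\to0$; and ``pushing $F$ closer to $E$'' runs into the rigidity $\|D^2_\tau v\|_{C^0}\le\overline C\lambda$, which prevents $v$ from absorbing the high-frequency part of $u$ that makes $E$ non-convex. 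The paper's penalized minimizer $H$ resolves both difficulties at once: minimality yields the linear lower bound with the correct $\lambda f(\cdot)$ on the right-hand side, and the Euler--Lagrange equation \eqref{equ v} together with the stability estimate for Alexandrov's Theorem (Theorem~\ref{mean curvature stability}) provides the $C^{1,\alpha}$, $W^{2,n}$ and BMO-Hessian control on $w_{x_0}$ needed to mollify $H$ into a uniformly convex $F$ with only lower-order errors. Without some analogue of that linear lower bound, the Fuglede/spectral approach as written does not reach the stated conclusion.
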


\begin{proof}
As before, 
we write the tangential gradient of $u$ as  $D_\tau u$.
%The proof of the proposition is split into two steps: the set $F$ is constructed combining a penalization argument and a heat-flow regularization. Then   we prove \eqref{near sphere}.
Before starting the proof, we observe that
\begin{equation}\label{f'}
f'(t)=\frac{1-\log t }{\log^2 t } \in \big(0, 2 |\log t|^{-1}\big)\quad\text{and} \quad f''(t)=\frac{-2 + \log t }{t (\log  t)^3}>0 \qquad \text{for }t \in (0,1/e).
\end{equation}

\smallskip

\noindent
$\bullet$ {\it Step 1: Solve a suitable variational problem.}
Given $\lambda>0$  small (the smallness to be fixed later), 
consider the variational problem
\begin{equation}\label{variation}
\min\left\{P(G) + 2\lambda f( |E\Delta G|\big)\,:\,|G|=|B| \right\},
\end{equation}
and define $\ell$ to be the minimal value in the problem above.
We claim that a minimizer exists.

Indeed, let $G_j$ be a minimizing sequence for \eqref{variation}. Thanks to the uniform boundedness of $P(G_j)$, up to a subsequence we know that $G_j \to G_\infty$ in $L^1_{\rm loc}(\R^n)$. 

Set $\delta:=\frac{|E|-|G_\infty|}{|E|}\ \in [0,1]$. 
If $\delta=0$ (i.e., $ |G_\infty|=|E|=|B|$), then it follows immediately by the lower semicontinuity of the perimeter that $G_\infty$ is a minimizer. Otherwise, we apply \cite[Lemma 2.2]{FL2015} to write $G_j=H_j\cup R_j$, where $H_j$ and $R_j$ are disjoint sets such that
$$
|H_j \Delta G_\infty| \to 0,\qquad R_j \to \emptyset \quad \text{locally}, \qquad P(H_j)+P(R_j)-P(G_j) \to 0.
$$
Hence $|R_j| \to \delta |E|=\delta |B|$, and it follows from the semicontinuity of the perimeter and the isoperimetric inequality that
\begin{equation}
\label{eq:concavity1}
\begin{split}
n|B|+2\lambda f\big( |E\Delta G_\infty|+\delta|E|\big) &\leq n|B|\left((1-\delta)^{(n-1)/n}+\delta^{(n-1)/n}\right)+2\lambda f\big( |E\Delta G_\infty|+\delta|E|\big)\\
&\leq 
P(G_\infty)+n|B|\delta^{(n-1)/n}+2\lambda f\big( |E\Delta G_\infty|+\delta|E|\big)\\
& \leq \liminf_{j\to \infty}P(H_j)+ n|B|^{1/n}|R_j|^{(n-1)/n}+2\lambda f\big( |E\Delta H_j|+|R_j|\big) \\
&\leq  \liminf_{j\to \infty}P(H_j)+ P(R_j)+2\lambda f\big( |E\Delta H_j|+|R_j|\big)\\
&=  \liminf_{j\to \infty}P(G_j)+2\lambda f\big( |E\Delta G_j|\big) = \ell,
\end{split}
\end{equation}
which shows in particular that
\begin{equation}
\label{eq:concavity12}
P(G_\infty)+n|B|\delta^{(n-1)/n}+2\lambda f\big( |E\Delta G_\infty|+\delta|E|\big)\leq \ell,
\end{equation}
On the other hand, using $B$ as competitor, since $E$ is $\sigma$-nearly spherical we get
$$
\ell \leq n|B|+ 2\lambda f\big( |E\Delta B|\big) \leq n|B|+2\lambda f(C(n)\sigma).
$$
Combining the inequality above with \eqref{eq:concavity1} we deduce that
$$
f\big( |E\Delta G_\infty|+\delta|E|\big) \leq f(C(n)\sigma),
$$
from which it follows (since $f$ is strictly increasing on $[0,1/e]$) that 
\begin{equation}
\label{eq:E delta G}
|E\Delta G_\infty|+\delta|E| \leq C(n) \sigma.
\end{equation}
In particular, this implies that $\delta \leq C(n)\sigma \ll 1.$

Recalling that $\delta>0$, we consider $\frac{1}{(1-\delta)^{1/n}}G_\infty$ as competitor. Then, since $E$ is nearly spherical we get
\begin{equation}
\label{eq:concavity2}\begin{split}
\ell & \leq P\Bigl(\frac{1}{(1-\delta)^{1/n}}G_\infty\Bigr) + 2\lambda f\bigg( \Big|E\Delta \frac{1}{(1-\delta)^{1/n}}G_\infty\Big|\bigg) \\
&=\frac{1}{(1-\delta)^{(n-1)/n}}P(G_\infty) + 2\lambda f\bigg( \frac{1}{1-\delta}\big| (1-\delta)^{1/n}E\Delta G_\infty\big|\bigg)\\
&\leq \frac{1}{(1-\delta)^{(n-1)/n}}P(G_\infty) + 2\lambda f\bigg( \frac{1}{1-\delta}\Big(|E\Delta G_\infty|+C(n)\delta\Big) \bigg).
\end{split}
\end{equation}
 Hence, 
combining \eqref{eq:concavity12} and \eqref{eq:concavity2}, since $f$ is 2-Lipschitz we get
\begin{multline*}
n|B|\delta^{(n-1)/n}\leq \biggl(\frac{1}{(1-\delta)^{(n-1)/n}}-1\biggr)P(G_\infty)
+2\lambda f\bigg( \frac{1}{1-\delta}\Big(|E\Delta G_\infty|+C(n)\delta\Big) \bigg)\\-2\lambda f\big(|E\Delta G_\infty|\big) \leq C(n)\delta,
\end{multline*}
a contradiction if $\sigma>0$ (and therefore $\delta>0$) is sufficiently small.
This proves that $\delta=0$ and therefore $G_\infty$ is a minimizer.

Note that, as a consequence of \eqref{eq:E delta G} applied with $\delta=0$ we deduce that, for any minimizer $G_\infty$,
\begin{equation}
\label{eq:EdeltaG}
|E\Delta G_\infty| \leq C(n) \sigma \ll 1.
\end{equation}

\smallskip

\noindent
$\bullet$ {\it Step 2: Construct the set $F$.}
Let $H$ denote a minimizer of the variational problem \eqref{variation} and note that,
for any set $G\subset \mathbb R^n$ with $|G|=|E|$, since  $f$ is 2-Lipschitz (see \eqref{f'}) we have 
$$P(H)\le P(G) + 2\lambda\big( f( |E\Delta G| ) -f( |E\Delta H| )\big)\le P(G)+4\lambda|G\Delta H|. $$
Also, by \eqref{eq:EdeltaG} applied with $G_\infty=H$, we have
\begin{equation}
\label{eq:EdeltaG 2}
|E\Delta H| \leq C(n) \sigma \ll 1.
\end{equation}
In particular, since $E$ is $\sigma$-nearly spherical it follows that $|H\Delta B| \leq C(n) \sigma$.
Thus, by the theory regularity for almost-minimizers close to a ball (see for instance \cite[Theorem 26.3]{M2012}),  $H=B+w$ for some $\|w\|_{C^{1,\az}(\mathbb S^{n-1})}=o_\sigma(1)$ with $\az=\az(n)>0$, where $o_\sigma(1)$ denotes a dimensional constant that goes to $0$ as $\sigma \to 0$.
Moreover $\partial H$ satisfies the Euler-Lagrange equation
\begin{equation}\label{equ v}
\mathcal{H}_{\partial H} +2\lambda f'\big(|E\Delta H|\big)\left(\mathbf{1}_{\overline H}-\mathbf 1_{\overline E}\right)=\mu \qquad \text{on }\partial H, 
\end{equation}
where $\mathcal H_{\partial H}$ denotes the mean curvature, and $\mu>0$ is a Lagrange multiplier associated to the volume constraint.

Let
$$x_0=\frac{1}{|H|}\int_{H} x\, dx, $$
denote the barycenter of $H$. As $H$ is nearly spherical, then 
$|x_0|=o_\sigma(1)$. Hence, the set $H_{x_0}:=H-x_0$ is nearly spherical,
 has barycenter at the origin, and satisfies
(thanks to  \eqref{equ v})
$$
|\mathcal{H}_{\partial H_{x_0}}-\mu|\leq 2\lambda f'\left(|E\Delta H|\right) \qquad \text{on }\partial H_{x_0}.
$$
Also, $\|w_{x_0}\|_{C^{1,\az}(\mathbb S^{n-1})}=o_\sigma(1) \leq 1$ for $\sigma$ sufficiently small.
Thus, if we write $H_{x_0}=B+w_{x_0}$, we can apply \eqref{eq:W2p} and \eqref{eq:BMO} to deduce that
 \begin{equation}
\label{w-u control w}
\|w_{x_0}\|_{W^{2,n}(\mathbb S^{n-1})}+\|D^2_{\tau}w_{x_0}\|_{BMO(\mathbb S^{n-1})}\leq C(n)\inf_{\mu \in \R}\|\mathcal H_{\partial H}-\mu\|_{L^{\infty}(\partial H)}\leq  C(n)\lambda f'\big(|E\Delta H|\big).
\end{equation}
In particular, Sobolev's inequality, \eqref{f'}, and \eqref{eq:EdeltaG 2}, imply
 \begin{equation}
\label{w C1a}
\|w_{x_0}\|_{C^{1,\alpha}(\mathbb S^{n-1})}\leq C(n)\|w_{x_0}\|_{W^{2,n}(\mathbb S^{n-1})} \leq C(n)\lambda f'\big(|E\Delta H|\big) \leq C(n)\frac{\lambda}{|\log \sigma|},
\end{equation}
while John-Nirenberg's Lemma (see for instance \cite[Chapter 7]{G2014}) yields
\begin{equation}\label{w2p w}
\|D^2_{\tau}w_{x_0}\|_{L^q(\mathbb S^{n-1})}\le C(n)q\|D^2_{\tau}w_{x_0}\|_{BMO(\mathbb S^{n-1})}\leq  C(n)\lambda q f'\big(|E\Delta H|\big) \qquad \forall\,q \in [1,\infty).
\end{equation}
We now define $w_\ez$ as  the solution to the heat equation 
$$\partial_\ez w_\ez = \Delta_{\mathbb S^{n-1}} w_\ez, \qquad w_0=w_{x_0}.$$
Then, denoting by $p_\ez$ the heat kernel on the unit sphere, we have
$$
w_\ez(x)=\int_{\mathbb S^{n-1}} w_{x_0}(y)p_\ez(x,y)\, d\mathscr H^{n-1}(y),
$$
and $p_\ez\geq 0$ satisfies 
\begin{equation}\label{eq:pe}
\|p_\ez(x,\cdot)\|_{L^\infty(\mathbb S^{n-1})}\le C(n) \ez^{\frac{1-n}{2}}, \qquad \|p_\ez(x,\cdot)\|_{L^1(\mathbb S^{n-1})}=1.
\end{equation}
In particular, recalling \eqref{w C1a},
$$
\|w_{\ez}\|_{C^{1,\az}(\mathbb S^{n-1})} \leq \|w_{x_0}\|_{C^{1,\az}(\mathbb S^{n-1})} \leq C(n)\frac{\lambda}{|\log \sigma|}.
$$
Also, it follows from H\"older's inequality, \eqref{w2p w}, and \eqref{eq:pe}, that
\begin{align}
\|D^2_{\tau}w_\ez\|_{L^\infty(\mathbb S^{n-1})}&\le \sup_{x\in \mathbb S^{n-1}} \int_{\mathbb S^{n-1}} |D^2_{\tau}w_{x_0}|(y)p_\ez(x,y)\, d\mathscr H^{n-1}(y)\nonumber\\
&\le \|D^2_{\tau}w_{x_0}\|_{L^q(\mathbb S^{n-1})}\sup_{x\in \mathbb S^{n-1}}  \|p_\ez(x,\cdot)\|_{L^{\frac{q}{q-1}}(\mathbb S^{n-1})}\le C(n)\lambda q f'\big(|E\Delta H|\big) \ez^{\frac {1-n}{2q}} \label{p}
\end{align}
for all $q\geq 1.$ 
By minimizing the last term in \eqref{p} with respect to $q$, which is attained when 
$$c(n) |\log \ez|\le q\le C(n) |\log \ez|,$$ 
we get
$$
\|D^2_{\tau}w_\ez\|_{L^\infty(\mathbb S^{n-1})}\le C(n)\lambda |\log \epsilon| f'\big(|E\Delta H|\big).
$$
Hence, choosing $\ez:= |E\Delta H|^6$
we eventually arrive at 
\begin{equation} \label{weps C2}
\|D^2_{\tau}w_\ez\|_{L^\infty(\mathbb S^{n-1})}\le C(n) \lambda \, \big|\log |E\Delta H|\big| f'\big(|E\Delta H|\big)\le C(n) \lambda,
\end{equation}
where the last inequality follows from \eqref{f'}.
Moreover, using  Poincar\'e inequality, it follows from \eqref{w-u control w} and the theory of the heat flow\footnote{The integration against the heat kernel $p_\ez$ locally corresponds to  integrating $w_{x_0}$ against a convolution kernel of size $\ez^{1/2}$, therefore
$$
\|w_{x_0}-w_\ez\|_{L^2(\mathbb S^{n-1})}
\le C(n) \ez^{1/2} \|D_\tau w_{x_0}\|_{L^2(\mathbb S^{n-1})}.
$$
}
that
\begin{align}
 \|w_{x_0}-w_\ez\|_{L^2(\mathbb S^{n-1})}
\le C(n) \ez^{1/2} \|D_\tau w_{x_0}\|_{L^2(\mathbb S^{n-1})} \leq C(n)\lambda |E\Delta H|^{3}   f'\big(|E\Delta H|\big),\label{L1 difference}
\end{align}
and from \eqref{weps C2}
\begin{align}
 \|D_\tau w_{x_0}-D_\tau w_\ez\|_{L^2(\mathbb S^{n-1})}
\le C(n) \ez^{1/2} \|D^2_{\tau} w_{x_0}\|_{L^2(\mathbb S^{n-1})} \leq C(n) \lambda |E\Delta H|^{3} . \label{L1 difference 2}
\end{align}
Now, let $F'=B+w_\ez$ denote the $\left(C(n)\frac{\lambda}{|\log \sigma|}\right)$-nearly spherical set defined by $w_\ez$.
As a consequence of \eqref{weps C2}, the set $F'$ is uniformly convex provided that $\lambda$ is small enough (the smallness depending only the dimension).
Also, thanks to \eqref{L1 difference},
\begin{multline}   |F'\Delta H_{x_0}| = \int_{\mathbb S^{n-1}}|(1+w_\ez)^n - (1+w_{x_0})^n|\,d\mathscr H^{n-1} \le C(n) \|w_\ez-w_{x_0}\|_{L^1(\mathbb S^{n-1})}\\
\le C(n) \|w_\ez-w_{x_0}\|_{L^2(\mathbb S^{n-1})}\le C(n)\lambda |E\Delta H|^{3}    f'\big(|E\Delta H|\big). \label{F' Hx0}
\end{multline}
Since $|H_{x_0}|=|H|=|E|$, this implies that
$$\left||F'|-|E |\right|\le |F'\Delta H_{x_0}| \leq C(n)\lambda |E\Delta H|^{3}    f'\big(|E\Delta H|\big)$$
namely $E$ and $F'$ have almost the same volume. 
Hence, if we define 
$$\rho:=\biggl(\frac{|E|}{|F'|}\biggr)^{1/n},\qquad F=\rho F'+x_0,$$ 
then
\begin{equation}
\label{eq:rho}
|\rho-1|\leq  C(n)\lambda |E\Delta H|^{3}    f'\big(|E\Delta H|\big),
\end{equation}
and $F=B+v$ is a uniformly convex $\left(C(n)\frac{\lambda}{|\log \sigma|}\right)$-nearly spherical set of class $C^2$. Furthermore, it follows from \eqref{weps C2} that $\|D^2_{\tau} v\|_{C^2}\leq C(n)\lambda$.
% of the form $F=B+v$, where $v=\rho-1+\rho w_\ez,$ and
%$$
%|F\Delta H|=|(\rho F')\Delta H_{x_0}|\leq C(n)\lambda |E\Delta H|^{3}    f'\big(|E\Delta H|\big).
%$$

\smallskip

\noindent
$\bullet$ {\it Step 3: Estimate the difference of norms.}
We first use $E$ as a competitor in \eqref{variation}  to get
\begin{equation}
\label{eq:P H E}
P(E)- P(H)\ge 2\lambda  f\big(|E\Delta H|\big). 
\end{equation}
%On the other hand, since $ w_\ez$ is obtained via the heat equation on the unit sphere, it is clear that
%$$\int_{\mathbb S^{n-1}} |D_\tau w|^2\, d\mathscr H^{n-1}\ge \int_{\mathbb S^{n-1}} |D_\tau w_\ez|^2\, d\mathscr H^{n-1}.$$
Also, recalling the formula for the perimeter of a nearly spherical set $G=B+u$, namely (see e.g.\ \cite[Step 1, Theorem 3.1]{F2017})
\begin{equation}\label{perimeter}
P(G)=\int_{\mathbb S^{n-1}} (1+u)^{n-2} \sqrt{(1+u)^{2(n-1)}+(1+u)^{2(n-2)}|D_\tau u|^2}\, d\mathscr H^{n-1}; 
\end{equation}
it follows from \eqref{L1 difference} and \eqref{L1 difference 2} that
\begin{equation}
\label{eq:P H F}
|P(H_{x_0})-P(F') |\le  C(n) \int_{\mathbb S^{n-1}} \Big(|D_\tau w_{x_0}-D_\tau w_\ez| +  |w_{x_0}-w_\ez| \Big) \, d\mathscr H^{n-1} \leq C(n) \lambda |E\Delta H|^{3}.
\end{equation}
Moreover,  \eqref{eq:rho} implies that
$$
|P(F)-P(F')| \leq \big|\rho^{n-1}-1\big|P(F') \leq C(n)\lambda |E\Delta H|^{3}    f'\big(|E\Delta H|\big)P(F'),
$$
so \eqref{eq:P H F} yields
$$
|P(H)-P(F)|\leq C(n)\lambda |E\Delta H|^{3}    f'\big(|E\Delta H|\big)P(F')+ C(n) \lambda |E\Delta H|^{3}\leq C(n) \lambda |E\Delta H|^{3}.
$$
Hence, combining this bound with \eqref{eq:P H E}, we conclude that
\begin{equation}
\label{eq:P H F2}
P(E)-P(F)=P(E)- P(H)+P(H)-P(F)
\geq 2\lambda \Bigl( f\big(|E\Delta H|\big) - C(n)  |E\Delta H|^{3}\Big).
\end{equation}
Also, since $f$ is $2$-Lipschitz and $f(t) \geq ct^2$ for $t \leq 1$, \eqref{F' Hx0} and \eqref{eq:rho} yield, for $\sigma \ll 1$,
\begin{align*}
f(|E\Delta F|)&\le f(|E\Delta H| ) +2|H_{x_0}\Delta F'|+2|F'\Delta (\rho F')|\le f(|E\Delta H| )+C(n)|E\Delta H|^{3} \\
&\leq \Big(1+C(n)|E\Delta H|\Big) f(|E\Delta H| ) \leq \frac{1+C(n)|E\Delta H|}{1-C(n)|E\Delta H|} \Bigl( f\big(|E\Delta H|\big) - C(n)  |E\Delta H|^{3}\Big) \\
&\leq 2 \Bigl( f\big(|E\Delta H|\big) - C(n)  |E\Delta H|^{3}\Big) .
\end{align*}
Hence, combining this bound with \eqref{eq:P H F2} we finally get 
$$
P(E)-P(F)\geq \lambda f\big(|E\Delta F|\big),
$$
 as desired.
\end{proof}

\subsection{On the sharpness of the function $f$}
\label{not t}
In this section we show that one cannot choose $f(t)=t$ in \eqref{near sphere}, and therefore also in \eqref{main thm ineq}, when $n\ge 3$.

To prove this fact, we begin by recalling that 
 the boundedness of the Laplacian of a function does not imply the boundedness of its Hessian (in other words, $(-\Delta_{\mathbb S^{n-1}})^{-1}$ is not a bounded operator from $L^\infty(\mathbb S^{n-1})$ onto $W^{2,\infty}(\mathbb S^{n-1})$).
 Hence, for any  $\theta \ll 1$ we can find a function $v_\theta:\mathbb S^{n-1} \to \R$ such that
$$
\|v_\theta\|_{W^{1,\,\infty}(\mathbb S^{n-1})}\le 1,\qquad \|\Delta_{\mathbb S^{n-1}} v_\theta\|_{L^\infty(\mathbb S^{n-1})} \leq 1,
\qquad \|D^2_{\tau} v_\theta\|_{L^\infty(\mathbb S^{n-1})} = \theta^{-3}.
$$
In particular, if we define $u_\theta:=\theta^2 v_\theta$, it follows that
\begin{equation}
\label{eq:u theta}
\|u_\theta\|_{W^{1,\,\infty}(\mathbb S^{n-1})}\le \theta^2,\qquad \|\Delta_{\mathbb S^{n-1}} u_\theta\|_{L^\infty(\mathbb S^{n-1})} \leq  \theta^2,
\qquad \|D^2_{\tau} u_\theta\|_{L^\infty(\mathbb S^{n-1})} = \theta^{-1}.
\end{equation}
Furthermore, up to adding to $u_\theta$ a constant of size at most $C(n)\theta^2$, we can ensure that 
$$
  \int_{\mathbb S^{n-1}} (1+u_\theta)^n = n|B|.
$$
Let $E_\theta$ be the $(C(n)\theta^2)$-nearly spherical set defined by $u_{ \theta},$ and note that $|E_\theta|=|B|$.
Also, since the Hessian of $u_\theta$ is large while its Laplacian is small, $E_\theta$ is not convex for $\theta \ll 1$.\footnote{
To see this one can note that, if $\lambda_1(x)\leq \ldots\leq \lambda_{n-1}(x)$ denote the eigenvalues of $D^2_{\tau} u_\theta$, then
$$
\biggl|\sum_{i=1}^{n-1}\lambda_i(x)\biggr| \leq \theta^{2},\qquad \max_{i=1}^{n-1}|\lambda_i(x)|=\theta^{-1}.
$$
This implies that $\lambda_{n-1}(x)\sim\theta^{-1}$ for $\theta \gg 1$, hence $B+u_\theta$ is not convex.
}

Consider now $G=B+w$ a $\theta$-nearly spherical set satisfying $|G|=|E_\theta|$. 
If we define the function $\mathcal F:(-1,\infty)\times \R^{n-1} \to \R$ as 
$$
\mathcal F(s,z):= (1+s)^{n-1}\sqrt{1+\frac{|z|^2}{(1+s)^2}},
$$
recalling \eqref{perimeter} we have
$$
P(E_\theta)-P(G)=\int_{\mathbb S^{n-1}} \Big( \mathcal F(u_\theta,D_\tau u_\theta)-\mathcal F(w,D_\tau w)\Big)\,  d\mathscr H^{n-1}.
$$
Now, since
\begin{equation}
\label{eq:Taylor}
\mathcal F(s,z)= (1+s)^{n-1}\bigg(1+\frac{|z|^2}{2(1+s)^2}\Biggr)+O\big(|z|^4\big)=1+(n-1)s+\frac{(n-1)(n-2)}{2}s^2 +\frac{|z|^2}{2}+O\big(|s|^3+|z|^3\big),
\end{equation}
we see that $\mathcal F$ is convex in a neighborhood of the origin (recall that $n \geq 3$). Hence, if $\theta$ is sufficiently small,
it follows by convexity that 
\begin{align*}
P(E_\theta)-P(G)&\leq \int_{\mathbb S^{n-1}} \Big( \partial_s\mathcal F(u_\theta,D_\tau u_\theta)(u_\theta-w)+ D_z\mathcal F(u_\theta,D_\tau u_\theta)\cdot (D_\tau u_\theta-D_\tau w)\Big)\,  d\mathscr H^{n-1}\\
&=\int_{\mathbb S^{n-1}} \Big( \partial_s\mathcal F(u_\theta,D_\tau u_\theta) -{\rm div}_\tau\big( D_z\mathcal F(u_\theta,D_\tau u_\theta) \big)\Big)(u_\theta-w)\,  d\mathscr H^{n-1}.
\end{align*}
Now, since $\mathcal F$ is smooth near the origin, $\partial_s\mathcal F(0,0)=0$ (see \eqref{eq:Taylor}), and
$\|u_\theta\|_{W^{1,\,\infty}(\mathbb S^{n-1})}\leq C(n)\theta^2$, we get
$$
\big|\partial_s\mathcal F(u_\theta,D_\tau u_\theta)\big|=
\big|\partial_s\mathcal F(u_\theta,D_\tau u_\theta)-\partial_s\mathcal F(0,0)\big|\leq C(n)\|u_\theta\|_{W^{1,\,\infty}(\mathbb S^{n-1})} \leq C(n)\theta^2.
$$
Also,
$$
\big|{\rm div}_\tau\big( D_z\mathcal F(u_\theta,D_\tau u_\theta) \big)\big| \leq \big|D^2_{sz}\mathcal F(u_\theta,D_\tau u_\theta)\cdot D_\tau u_\theta|+\big|D^2_{zz}\mathcal F(u_\theta,D_\tau u_\theta):D^2_{\tau} u_\theta\big|.
$$
The first term in the right hand side above can be bounded by $C(n)\|u_\theta\|_{W^{1,\,\infty}(\mathbb S^{n-1})} \leq C(n)\theta^2$.
For the second term, since $D^2_{zz}\mathcal F(s,z)={\rm Id}+O(|s|+|z|)$ (see \eqref{eq:Taylor}), it follows from \eqref{eq:u theta} that
$$
\big|D^2_{zz}\mathcal F(u_\theta,D_\tau u_\theta):D^2_{\tau} u_\theta\big|\leq |\Delta_{\mathbb S^{n-1}} u_\theta|+C(n)\bigl(|u_\theta|+|D_{\tau} u_\theta|\big)|D^2_{\tau} u_\theta|\leq \theta^2+ C(n)\theta \leq C(n)\theta.
$$
Hence, in conclusion, we proved that
\begin{multline}
\label{eq:Etheta min}
P(E_\theta)-P(G)\leq C(n)\theta \int_{\mathbb S^{n-1}} |u_\theta-w|\,  d\mathscr H^{n-1}\\
\le C(n)   \theta  \int_{\mathbb S^{n-1}}  |(1+ u_\theta)^n-(1+w)^n|\, d\mathscr H^{n-1} \le  C(n) \theta |E_\theta\Delta G|.
\end{multline}
On the other hand, if $\theta$ is sufficiently small, we can apply Proposition~\ref{nearly spherical}
with $\lambda\leq \overline\lambda(n)$, $\sigma=C(n)\theta^2$, and
$E=E_\theta$, to find a $(C(n)\theta^2)$-nearly spherical set convex set $F_\theta$ such that
$$
P(E_\theta)-P(F_\theta)\ge \lambda  f\big(|E_\theta\Delta F_\theta|\big).
$$
Applying \eqref{eq:Etheta min} with $G=F_\theta$ we conclude that
$$
C(n) \theta |E_\theta\Delta F_\theta|\geq \lambda  f\big(|E_\theta\Delta F_\theta|\big).
$$
In particular, if one could choose $f(t)=t$,
taking $\theta<\frac{\lambda}{C(n)}$ we would deduce that $|E_\theta\Delta F_\theta|=0$.
This implies that $E_\theta$ is convex, a contradiction.

\section{General case}
\label{sect:general}
%We write 
%$$D_P(E):=\frac{P(E)-P(B_r)}{P(B_r)}$$
%and
%Let us first recall the following standard reduction lemma.
%
%\begin{lem}
%There exist two constants $L,\,C>0$ depending only on $n$ so that, given a measurable set $E\subset \mathbb R^n$ with $|E|=|B|$, there is another set $F\subset B_L$ so that $|F|=|B|$ and 
%\begin{equation}\label{F}
%D_P(F)\le C  D_P(E),\quad d(E)\le d(F)+C D_P(E). 
%\end{equation}
%\end{lem}

\begin{proof}[Proof of Theorem~\ref{main thm}]
Recall that $f(t)$ is 2-Lipschitz (see \eqref{f'}). In particular $f(t)\leq 2t$.

Let $0<\lambda\le  \overline\lambda(n)$, where $\overline\lambda(n)$ is the one in Proposition~\ref{nearly spherical}. 
Suppose that the conclusion of the theorem fails. Then there exists  a sequence of sets of finite perimeter $E_k$, with $|E_k|=|B|$, such that
\begin{equation}\label{assumption}
0<P(E_k)-P(G)< \gamma  \lambda f\big(|E_k\Delta G|\big) \qquad \text{for all $G\in \mathscr C_\lambda$}
\end{equation}
where $\gamma=\gamma(n)>0$ is a small dimensional constant to be fixed later.
Taking $G=B+x$ in \eqref{assumption} with $x\in \R^n$ arbitrary, and using the stability result for the isoperimetric inequality from \cite{FMP2008,FMP2010}, we have
\begin{multline*}
\inf_{x \in \R^n}|E_k\Delta (B+x)|^2\leq C(n)\Big(P(E_k)-P(G)\Big)\\
 \leq C(n)\gamma \lambda \inf_{x \in \R^n} f\big(|E_k\Delta (B+x)|\big)\leq 2 C(n)\gamma \lambda \inf_{x \in \R^n} |E_k\Delta (B+x)|,
\end{multline*}
from which it follows that
$$
\inf_{x \in \R^n}|E_k\Delta (B+x)| \leq 2C(n)\gamma \lambda.
$$
In particular, up to a translation, we can assume that
\begin{equation}
\label{eq:Ek close B}
|E_k\Delta B| \leq 2C(n)\gamma \lambda.
\end{equation}
Consider the minimizer of
$$\min\{ P(F)+  f\big(|E_k\Delta F|\big) \colon |E_k|=|F|\}.$$
Assuming $\gamma$ to be sufficiently small so that $|E_k\Delta B| \ll 1$ (see \eqref{eq:Ek close B}),
the existence of such a minimizer $F_k$ follows from Step 1 in the proof of Proposition~\ref{nearly spherical}. Also, arguing as at the beginning of Step 2 in the proof of Proposition~\ref{nearly spherical}, we deduce that $F_k$ is an almost-minimizer that is $L^1$ close to a ball.
Hence, the regularity theory for almost-minimizers (see for instance \cite[Theorem 26.3]{M2012}) implies that  $F_k=B+w_k$ for some $\|w_k\|_{C^{1,\az}(\mathbb S^{n-1})} \to 0$ as $\gamma \lambda \to 0$. 

In particular, if $\gamma$ is sufficiently small, we can apply Proposition~\ref{nearly spherical} with $\frac{ \lambda }{2\overline C(n)}$ in place of $\lambda$ to deduce the existence of  a convex set $G_k \in \mathscr C_\lambda$ (namely, $G_k=x_k+(B+v_k)$ with $\|v_k\|_{C^2(\mathbb S^{n-1})}\leq \lambda$) such that
\begin{equation}
\label{eq:Fk}
P(F_k)-P(G_k)\ge \frac{ \lambda }{2\overline C(n)}  f\big(|F_k\Delta G_k|\big).
\end{equation}
On the other hand, taking $G=G_k$ in \eqref{assumption} and using $E_k$ as a competitor against the minimality of $F_k$, we get
\begin{equation}
\label{eq:Fk2}
P(F_k)+ f\big(|E_k\Delta F_k|\big)\le P(E_k)\le P(G_k)+   \gamma \lambda f\big(|E_k\Delta G_k|\big).
\end{equation}
Hence, combining \eqref{eq:Fk} and \eqref{eq:Fk2}, we conclude that
\begin{equation}
\label{eq:Fk3}
\frac{ \lambda }{2\overline C(n)}  f\big(|F_k\Delta G_k|\big)+ f\big(|E_k\Delta F_k|\big)\le  \gamma \lambda f\big(|E_k\Delta G_k|\big).
\end{equation}
Note now that $f$ is convex on $(0,1/e)$ (see \eqref{f'}) and $f(t/2)\geq \frac14 f(t)$ for $t>0$ small.
Hence, since $|F_k\Delta G_k|+|E_k\Delta F_k|+|E_k\Delta G_k| \ll 1$, we get
$$f\big(|F_k\Delta G_k|\big)+  f\big(|E_k\Delta F_k|\big)\ge 2 f\left(\frac{|E_k\Delta G_k|}{2}\right) \ge \frac12  f\big(|E_k\Delta G_k|\big).$$
Combining this inequality with \eqref{eq:Fk3} we finally get
$$
\frac12 \min \left\{\frac{ \lambda }{2\overline C(n)},1\right\} \leq \gamma\lambda,
$$
a contradiction if $\gamma=\gamma(n)>0$ is chosen sufficiently small.
\end{proof}

\appendix
\section{Strong stability in 2D}\label{app}

The goal of this short appendix is to prove \eqref{stability plane} and \eqref{stability plane2}. The proof is elementary and goes as follows.
First of all it is well-known that, in two dimensions, convexification decreases the perimeter of every open connected bounded set.
More precisely, if ${\rm cov}(E)$ denotes the convex hull of $E$, then $P({\rm cov}(E))\le P(E).$
Hence, thanks to the inclusion $E\subset {\rm cov}(E)$ we get
\begin{align*}
\frac{P(E)}{|E|^{\frac 1 2}}- \frac{P({\rm cov}(E))}{|{\rm cov}(E)|^{\frac 1 2}} &\geq \frac{P({\rm cov}(E))}{|E|^{\frac 1 2}}- \frac{P({\rm cov}(E)))}{|{\rm cov}(E)|^{\frac 1 2}}\\
&=P({\rm cov}(E)) \frac{|{\rm cov}(E)| - |E|}{|{\rm cov}(E)|^{\frac12}|E|^{\frac12} \left(|{\rm cov}(E)|^{\frac12}+|E|^{\frac12}\right)}\\
&\geq \frac{P({\rm cov}(E))}{|{\rm cov}(E)|^{\frac 1 2}} \frac{|{\rm cov}(E)| - |E|}{2|{\rm cov}(E)|^{\frac12}|E|^{\frac12}} \geq \sqrt{\pi} \frac{|{\rm cov}(E)\setminus E|}{|{\rm cov}(E)|^{\frac12}|E|^{\frac12}},
\end{align*}
where the last inequality follows from the two-dimensional isoperimetric inequality applied to ${\rm cov}(E)$.
This proves \eqref{stability plane}.

Now suppose that $|E|=|B|=\pi$, and assume (up to a translation) that $0\in E$.
Then, we define
$$F=\theta\, {\rm cov}(E),\qquad \theta:=\left(\frac{|E|} {|{\rm cov}(E)|}\right)^{1/2}.$$
Note that $|F|=|E|$ and $F\subset {\rm cov}(E)$ (here we use that $0\in E\subset {\rm cov}(E)$ and that $\theta \leq 1$), therefore
$$
|{\rm cov}(E)\setminus F|=|{\rm cov}(E)|-| F|=|{\rm cov}(E)|-| E|=|{\rm cov}(E)\setminus E|.
$$
Hence, if  $|{\rm cov}(E)|\le 2\pi$,
\begin{align*}
|E\Delta F|&\le |{\rm cov}(E)\setminus E|+|{\rm cov}(E)\setminus F|= 2 |{\rm cov}(E)\setminus E| \\
&\le 2(\sqrt\pi)^{-1}|{\rm cov}(E)|^{\frac 1 2 }(P(E)- P(F)) \leq 2\sqrt{2}(P(E)- P(F)).
\end{align*}
On the other hand, if  $|{\rm cov}(E)|\ge 2\pi,$ then
$$P(E)-P(F)\ge \sqrt\pi \frac{|{\rm cov}(E)\setminus E|}{|{\rm cov}(E)|^{\frac 1 2}}\ge \sqrt\pi \frac{2\pi - \pi}{\sqrt{2\pi}}\ge  \frac{1}{2\sqrt{2}} |E\Delta F|,$$
as $|E\Delta F|\le 2\pi.$
 This proves that, in both cases,
 $$P(E)- P(F)\ge       \frac{1}{2\sqrt{2}} {|E\Delta F|},$$
as desired.
\qed

\end{document}